\numberwithin{equation}{section}
\theoremstyle{plain}
\newtheorem{theorem}{Theorem}[section]
\newtheorem{lemma}[theorem]{Lemma}
\newtheorem{corollary}[theorem]{Corollary}
\newtheorem{proposition}[theorem]{Proposition}
 \theoremstyle{definition}
\newtheorem{definition}[theorem]{Definition}
\newtheorem{remark}[theorem]{Remark}
\newtheorem{example}[theorem]{Example}
\DeclarePairedDelimiterX{\inp}[2]{\langle}{\rangle}{#1, #2}
\newcommand{\cA}{{\mathcal A}}
\newcommand{\cE}{{\mathcal E}}
\newcommand{\cL}{{\mathcal L}}
\newcommand{\cV}{{\mathcal V}}
\newcommand{\ba}{\begin{eqnarray}}
\newcommand{\na}{\end{eqnarray}}
\newcommand{\ban}{\begin{eqnarray*}}
\newcommand{\nan}{\end{eqnarray*}}
\newcommand{\R}{{\mathbb R}}
\newcommand{\Z}{{\mathbb Z}}
\renewcommand{\thefootnote}{\fnsymbol{footnote}}
\g@addto@macro{\endabstract}{\@setabstract}
\newcommand{\authorfootnotes}{\renewcommand\thefootnote{\@fnsymbol\c@footnote}}%
\title[]{On the Spectrum and Energy of  \\ Digraphs with Self-Loops}
\subjclass[2020]{05C50, 05C90, 05C92}
\keywords{Digraphs with self-loops; Energy of digraphs; Spectrum of digraphs; Complement of digraphs; Regular digraphs}
\begin{document}

\begin{center}
    \vspace{-1cm}
	\maketitle
	
	\normalsize
    \authorfootnotes
    Kevin Fung, Johnny Lim\footnote[1]{Corresponding author.}
	\par \bigskip



        \small{School of Mathematical Sciences, Universiti Sains Malaysia, Penang, Malaysia}\par \bigskip

\end{center}

\address{School of Mathematical Sciences, Universiti Sains Malaysia, Penang, Malaysia
}
\email{kevin71koh@gmail.com}
\email{johnny.lim@usm.my}

\begin{abstract}
A digraph with self-loops $D_S$ with vertex set $\cV$ is a simple digraph with a self-loop attached at every vertex in $S\subset \cV.$
In this paper, we study the energy $E(D_S)$ of $D_S$ and its properties, which extend several classical results on simple directed graphs. If $D_1,...,D_k$ are the strong components of $D_S,$ we establish a necessary and sufficient conditions for $E(D_S) \leq \sum^k_{i=1} E(D_i),$ for which the strict inequality exists for $S\neq \emptyset.$ We also provide several bounds and characterizations for the energy and spectral radius of $D_S,$ including the McClelland type bound. Lastly, we propose a notion of the complement of $D_S$ and establish some formulae describing the relationship between the energy and spectrum of regular digraphs with their complement.
\end{abstract}

\section{Introduction}
\label{sec1}

    A digraph with self-loops $D_S$ is a pair $(\cV,\cA)$, where $\cV$ is a nonempty finite set of elements called \textit{vertices} and $\cA$ is a (possibly empty) set of ordered pairs of vertices called \textit{arcs}. Let $\cA'$ be the subset of $\cA$ that consists of ordered pairs of \textit{distinct} vertices. Let $S$ be a subset of $\cV$ that consists of vertices $v$ such that $(v,v)\in \cA$. We call $\cV,$ $\cA,$ and $S$ the \textit{vertex set, arc set, and self-loop set} of $D_S,$ respectively.  If $|\cV|=n, |\cA'|=m$ and $|S|=\sigma$, we shall say $D_S$ is a digraph of order $n$, size $m$ and has $\sigma$ self-loops. If $S= \emptyset$, then $D_S=D$ is called \textit{simple digraph}. If $S=V$, then we say $D_S=\widetilde{D}$ is a \textit{digraph with full loops}.
    
    For $(v,w)\in \cA,$ if $v \neq w,$ we call $(v,w)$ an arc from $v$ to $w.$ If $v = w,$ then such arc is a directed self-loop at $v$. Note that by our definition of $\cA$, multiple arcs and multiple loops are not allowed. Therefore, for two distinct vertices $v,w$, there can be at most one arc from $v$ to $w$ and at most one directed loop at each vertex $v$.

    Let $G=(\cV,\cE)$ be a finite simple undirected graph where $\cE$ is the set of undirected edges $vw$ for distinct $v,w \in \cV.$ By attaching a self-loop at every vertex in $S \subset \cV,$ we call such graph $G_S=(\cV,\cE, S)$ a self-loop (undirected) graph.  The \textit{symmetrization} of $G_S$, denoted as $\overleftrightarrow{G_S}$, is defined to be the directed graph with the same vertex set as $G_S$ and each edge $vw$ in $G_S$ is replaced with two arcs $(v,w)$ and $(w,v)$ for $v \neq w$ and each undirected self-loop $vv$ is replaced with a directed self-loop $(v,v)$.  
    
    For a digraph with self-loops $D_S=(\cV,\cA)$ with $\cV=\{v_1, \ldots, v_n\},$ the adjacency matrix $A(D_S)=(a_{ij})_{1 \leq i,j \leq n}$ is defined as
    \[
    a_{ij}=
    \begin{cases}
	0, &\text{if } (v_i,v_j) \notin \cA,\\
	1, &\text{if } (v_i,v_j) \in \cA.
    \end{cases}
    \]
    By our definition, the adjacency matrix $A(D_S)$ is a $(0,1)$-matrix but not necessary symmetric as in the case of undirected graphs. So the eigenvalues of $A(D_S)$ might be complex. Moreover, we have $A(G_S)=A(\overleftrightarrow{G_S})$.

    Let $D_S$ be a digraph with self-loops of order $n$ with vertex set $V=\{v_1,\ldots,v_n\}$ and adjacency matrix $A(D_S)=(a_{ij})_{1 \leq i,j \leq n}$. For each $i$, the out-degree (resp. in-degree) $deg^+(v_i)$ (resp. $deg^-(v_i)$) of vertex $v_i$ is defined as 
    \begin{align*}
        deg^+ (v_i) &= \sum_{j=1}^n a_{ij}, \\
        deg^- (v_i) &= \sum_{j=1}^n a_{ji}.
    \end{align*}
    Hence, one can interpret the out-degree of $v_i$ as the $i$-th row sum and in-degree as the $i$-th column sum of $A(D_S)$. Notice that each self-loop contributes $1$ to both the out- and in-degree of vertex, respectively. Meanwhile, for a graph with self-loops $G_S=(\cV, \cE, S)$ with $\cV=\{v_1,\ldots,v_n\}$ and adjacency matrix $A(G_S)$, the degree of $v_i$ $deg_{G_S}(v_i)$ is defined as 
    \[
    deg_{G_S}(v_i)=
    \begin{cases}
	\sum_{i=1}^n a_{ij} +1, &\text{if $v_i\in S$,}\\
	\sum_{i=1}^n a_{ij}, &\text{if $v_i \in \cV-S$.}
    \end{cases}
    \]
    Our definition here implies that each self-loop contributes $2$ to the degree of a vertex. More specifically, if $deg_G(v_i)$ denote the degree of vertex $v_i$ without considering loops, then $deg_{G_S}(v_i)=deg_{G}(v_i)+2$. A graph with self-loops $G_S$ is said to be \textit{$(a,b)$-bidegreed} if for every vertex $v$, $deg_{G_S}(v)$ equals to either $a$ or $b$.

    A digraph with self-loops $D_S$ is called \textit{$r$-regular} if 
    \begin{equation}
    \label{eq:regular1}
    deg^-(v)=deg^+(v)=r
    \end{equation}
    holds for every vertex $v$ of $D_S$.    
    If $D_S$ is $r$-regular, then the spectral radius of $D_S$ equals to $r$, with the corresponding eigenvector $j=(1,\ldots,1)^T$. If $\sigma=0$, then the definition reduces to the definition of regularity of simple digraphs. If $\sigma=n$, then the underlying simple digraph has regularity of $r-1$. 
    
    We adopt the following convention throughout: $[n]$ as the set $\{1,2,\ldots, n\};$  $I_n$ as the identity matrix of order $n;$ $J_n$ as the matrix of order $n$ whose entries are all 1's; $K_n$ as the complete graphs of order $n$; and $K_{a \times b}$ as the complete multi-partite graph $K_{b,\ldots, b}$ of $a$ many part size $b$. A digraph with self-loops is said to be acyclic if it does not contain any cycle of length at least 2. A digraph with self-loops $D_S$ is called \textit{strongly connected} if for any two distinct vertices $v,w$ of $D_S$, there is a path from $v$ to $w$ and vice versa. A \textit{strong component} of $D_S$ is a maximally strongly connected sub-digraph of $D_S$. For convention not mentioned here, we refer the readers to \cite{cvetkovic1995spectra}.

    Motivated by Gutman et al. \cite{gutman2021energy}, in Sect. 2, we define the energy $E(D_S)$ of a digraph with self-loops $D_S$ as 
    \begin{equation*}
    E(D_S)=\sum_{i=1}^n \left | \Re(\lambda_i)-\frac{\sigma}{n} \right|,
    \end{equation*}
    where $\Re(\lambda_i)$ is the real part of $\lambda_i \text{ for } 1 \leq i \leq n$. If $S=\emptyset$, then $E(D_S)$ reduces to the energy $E(D)$ of simple directed graph given by Pe\~{n}a and Rada \cite{pena2008energy}. In \cite{rada2009mcclelland}, Rada showed that if $D$ is a simple digraph with $D_1, \ldots, D_k$ being its strong components, then $E(D)=\sum_{i=1}^k E(D_i)$ holds. In Sect. \ref{sec3}, we extend this to the case of digraphs with self-loops and give sufficient and necessary conditions for $E(D_S)\leq \sum_{i=1}^k E(D_i)$ to hold. In particular, an example is given to demonstrate that $E(D_S) < \sum_{i=1}^k E(D_i)$ is possible for $S \neq \emptyset$. In Sect. \ref{sec4}, for a digraph $D_S$ of order $n$, size $m,$ and $\sigma$ self-loops, the McClelland type bound 
    $$E(D_S)\leq \sqrt{\frac{1}{2}n \left(m+c_2+2\sigma-\frac{2\sigma^2}{n} \right)},$$ 
    as well as the characterization of the equality that generalizes that of Rada \cite{rada2009mcclelland} is established. 
    
    In Sect. \ref{sec5}, an upper bound for the spectral radius of $D_S$, namely $\rho \leq \frac{c_2+\sigma}{n},$ and its characterization are given. This upper bound generalizes 
    \cite[Theorem 2.1]{gudino2010lowerboundofspectral}. We propose a definition of the complement of $D_S$ and establish the relationship between the spectrum of a regular $D_S$ with its complement (See Lemma \ref{lambda_complement_regular}). For a digraph $D_S$ of order $n$, size $m$ and $\sigma$ self-loops and $\overline{D_S}$ with size $\overline{m}$, if $\rho$ and $\overline{\rho}$ are their spectral radius, respectively, then in Lemma \ref{ul_bounds_p+p-}, we show that
        \begin{equation*}
            \label{ul_bounds_p+p-}
            1-\delta_\sigma+\frac{c_2+\overline{c_2}}{n} \leq \rho+ \overline{\rho} \leq 1-\delta_\sigma + \sqrt{ (n-1)^2-\frac{4\sigma(n-\sigma)}{n^2} + \frac{4\sigma(n-\sigma)}{n}+\frac{(n-1)(c_2+\overline{c_2})}{n} }.
        \end{equation*}
    Finally, a formula for $E(D_S)+E(\overline{D_S})$ is given for regular $D_S$ (See Theorem \ref{sum_of_E_and_E_comp}), followed by an example to illustrate the theorem.
    \section{Preliminaries }
    \label{sec2}
    The following results will be used frequently in this paper.

    \begin{theorem}[Perron-Frobenius Theorem] \cite{perron1907, frobenius1912}
    \label{Perron}
    Let $A$ be a nonnegative real matrix. Then there exists a   nonnegative real number $\rho$ such that the following holds:
    \begin{enumerate}[(a)]
    \item If $\lambda$ is any (possibly complex) eigenvalue  of $A$, then $|\lambda|\leq \rho$ holds. The inequality is strict if all entries of $A$ are positive (in this case we say $A$ is positive).
    
    \item If $A$ is positive, then $\rho$ is a simple eigenvalue (algebraic multiplicity of $\rho$ equals one).
    
    \item The eigenvector corresponds to $\rho$ has nonnegative entries. If $A$ is positive, then there is an eigenvector corresponds to $\rho$ with positive entries.
\end{enumerate}
    
\end{theorem}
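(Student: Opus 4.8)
The statement is the classical Perron--Frobenius theorem, so the plan is to give a self-contained argument rather than merely cite it. Throughout write $\rho = \rho(A) := \max\{|\lambda| : \lambda \text{ an eigenvalue of } A\}$ for the spectral radius, so the first inequality of (a) is immediate by definition; the real content is to realize $\rho$ as an eigenvalue carrying an eigenvector of the stated sign and to pin down its multiplicity. The strategy is to dispatch the strictly positive case by a fixed-point argument and then recover the general nonnegative case by a perturbation limit. For positive $A$, on the simplex $\Delta = \{x \in \R^n : x_i \ge 0,\ \sum_i x_i = 1\}$ the map $f(x) = Ax/\|Ax\|_1$ is continuous (positivity forces $Ax$ to have strictly positive entries for every $x \in \Delta$), so Brouwer's fixed-point theorem yields $x^\star \in \Delta$ with $Ax^\star = \mu x^\star$ and $\mu = \|Ax^\star\|_1 > 0$; as $Ax^\star$ is positive, $x^\star$ is a strictly positive eigenvector. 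Applying this to $A^T$ likewise produces a strictly positive left eigenvector $w$ with $wA = \nu w$, $\nu > 0$.

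Next I would establish (a). For any eigenpair $Ay = \lambda y$ the triangle inequality together with $A \ge 0$ gives the entrywise bound $|\lambda|\,|y| \le A|y|$, where $|y|$ denotes the vector of moduli. Pairing on the left with $w$ yields $|\lambda|\, w|y| \le w(A|y|) = \nu\, w|y|$, and $w|y| > 0$ gives $|\lambda| \le \nu$. Computing $wAx^\star$ two ways, $\nu\, wx^\star = (wA)x^\star = w(Ax^\star) = \mu\, wx^\star$ with $wx^\star > 0$, forces $\mu = \nu =: \rho'$; since every eigenvalue is dominated by $\rho'$ and $\rho'$ is itself an eigenvalue, $\rho' = \rho$. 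For the strictness when $A$ is positive, suppose $|\lambda| = \rho$: then the $w$-weighted inequality is an equality, which (as $w > 0$) forces the entrywise equality $A|y| = \rho|y|$, so $|y| = \rho^{-1}A|y| > 0$; the equality case of the triangle inequality with every $a_{ij} > 0$ then forces all $y_j$ to share one phase, i.e.\ $y = e^{i\theta}|y|$ and hence $\lambda = \rho$. Thus $|\lambda| < \rho$ for every eigenvalue $\lambda \ne \rho$.

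For (b), still assuming $A$ positive, geometric simplicity follows by the phase argument above (every eigenvector for $\rho$ is a scalar multiple of a positive vector) combined with a standard squeezing: given positive eigenvectors $y, z$, the largest $t$ with $y - tz \ge 0$ makes some coordinate vanish, yet $A(y - tz) = \rho(y - tz)$ is strictly positive unless $y - tz = 0$, so $y = tz$ and the eigenspace is one-dimensional. To upgrade this to algebraic simplicity I would rule out a Jordan block, i.e.\ a vector $v$ with $(A - \rho I_n)v = x^\star$: pairing with the left eigenvector gives $0 = w(A - \rho I_n)v = w x^\star > 0$, a contradiction. Hence the geometric and algebraic multiplicities of $\rho$ coincide and equal one.

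Finally, for (c) and the general nonnegative case, apply the positive case to $A_\epsilon = A + \epsilon J_n$ for $\epsilon > 0$, obtaining a normalized positive eigenvector $x_\epsilon \in \Delta$ with $A_\epsilon x_\epsilon = \rho(A_\epsilon) x_\epsilon$. Monotonicity and continuity of the spectral radius in the matrix entries give $\rho(A_\epsilon) \to \rho(A)$ as $\epsilon \downarrow 0$, while compactness of $\Delta$ extracts a limit $x_\epsilon \to x \in \Delta$ along a subsequence; passing to the limit gives $Ax = \rho x$ with $x \ge 0$, $x \ne 0$, proving (c) and exhibiting $\rho$ as an attained eigenvalue, while the nonstrict inequality in (a) was already obtained without positivity. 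I expect the algebraic-simplicity step of (b) to be the main obstacle, since it is the only point that must see past eigenvectors into the Jordan structure, and the left-eigenvector pairing is precisely the device that converts one-dimensionality of the eigenspace into genuine simplicity. A secondary subtlety worth flagging is that strict positivity and simplicity are exactly the properties that fail to survive the $\epsilon \downarrow 0$ limit, which is why (b) and the positive-eigenvector half of (c) must be confined to the strictly positive case.
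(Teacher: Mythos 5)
The paper never proves this statement: Theorem \ref{Perron} is quoted as a classical preliminary, with the citations to Perron and Frobenius standing in for an argument. Your proposal therefore necessarily takes a different route --- you supply a self-contained proof, and it is correct. The Brouwer fixed-point argument on the simplex produces the strictly positive right and left eigenvectors $x^\star$ and $w$; the $w$-pairing gives the domination $|\lambda|\leq\rho$ and identifies $\mu=\nu=\rho$; the equality-case/phase analysis gives strictness for positive $A$; the squeezing argument plus the Jordan-block obstruction $0 = w(A-\rho I_n)v = wx^\star > 0$ upgrades geometric to algebraic simplicity; and the perturbation $A+\epsilon J_n$ with a compactness extraction correctly recovers the nonnegative case of (c), while rightly confining simplicity and strict positivity of the eigenvector (which genuinely fail in the limit) to the positive case. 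Two small points you should make explicit if this were written out in full: part (a) as stated (``the inequality is strict'') must be read as $|\lambda|<\rho$ for eigenvalues $\lambda\neq\rho$, which is exactly what your phase argument establishes, since $\rho$ itself is always an eigenvalue; and the continuity of the spectral radius as $\epsilon\downarrow 0$ deserves a one-line justification (the eigenvalues are the roots of the characteristic polynomial, whose coefficients depend continuously on the entries). Your route buys self-containedness at the price of invoking Brouwer's theorem; the paper's citation buys brevity, which is reasonable given that the theorem is used here only as an off-the-shelf tool.
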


\begin{lemma}
    \cite{1996herstein_abstract_algebra}
    \label{rationalrootsofmonic}
    Let $p(x)\in \Z[x]$ be a monic polynomial. If $a$ is a rational root of $p(x)$, then $a$ is an integer.
\end{lemma}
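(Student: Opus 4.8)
The plan is to run the standard rational-root argument, specialized to the monic case. Write $p(x) = x^n + c_{n-1}x^{n-1} + \cdots + c_1 x + c_0$ with each $c_i \in \Z$, and express the rational root in lowest terms as $a = r/s$ where $r, s \in \Z$, $s \geq 1$, and $\gcd(r,s) = 1$. The goal is to show $s = 1$, which is equivalent to $a$ being an integer.

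First I would substitute $a = r/s$ into $p$ and clear denominators. Multiplying the relation $p(r/s) = 0$ through by $s^n$ produces the integer equation
$$r^n + c_{n-1} r^{n-1} s + \cdots + c_1 r s^{n-1} + c_0 s^n = 0.$$
Every term after the first carries a factor of $s$, so isolating the leading term gives $r^n = -s\left(c_{n-1}r^{n-1} + \cdots + c_1 r s^{n-2} + c_0 s^{n-1}\right)$, and the parenthesized quantity is an integer. Hence $s \mid r^n$.

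The key step is then to invoke coprimality: since $\gcd(r,s) = 1$, unique factorization in $\Z$ gives $\gcd(r^n, s) = 1$ as well (no prime dividing $s$ divides $r$, hence none divides $r^n$). Combined with $s \mid r^n$, this forces $s = 1$, so $a = r \in \Z$, as claimed.

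There is no serious obstacle here, as this is a classical fact; the only point requiring care is the passage from $\gcd(r,s)=1$ to $\gcd(r^n,s)=1$. It is worth emphasizing where monicity enters: the leading coefficient being $1$ is exactly what removes the usual ``$s$ divides the leading coefficient'' clause of the general rational-root theorem and collapses the conclusion to $s = 1$. Without monicity one could only conclude that $a$ is rational with denominator dividing the leading coefficient.
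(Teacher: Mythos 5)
Your proof is correct. The paper does not actually prove this lemma --- it cites it as a known fact from Herstein's \emph{Abstract Algebra} --- and your argument is precisely the standard rational-root-theorem proof one finds there: write $a=r/s$ in lowest terms, clear denominators to get $s \mid r^n$, and use $\gcd(r,s)=1$ (hence $\gcd(r^n,s)=1$) to force $s=1$. Nothing further is needed.
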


\begin{lemma}
    \label{charpoly_arc_deletion}
    (Compare with \cite[Theorem 2.3]{rada2009mcclelland})
    Suppose $D_S$ is a digraph with self-loops of order $n$. Let $D_S'$ be a sub-digraph of $D_S$ obtained by deleting some arcs that do not belong to any cycle. Then, $D_S$ and $D_S'$ have the same characteristic polynomial, i.e.
    $\phi_{D_S}(\lambda)=\phi_{D_S'}(\lambda)$.
\end{lemma}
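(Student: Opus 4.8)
The plan is to use the combinatorial (Coefficients Theorem) description of the characteristic polynomial of a digraph in terms of its linear subdigraphs, and then to observe that the arcs being deleted simply cannot appear in any such subdigraph, so that every coefficient is unaffected.

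First I would recall the relevant expansion. Writing $A=A(D_S)=(a_{ij})$, the coefficient of $\lambda^{n-k}$ in $\phi_{D_S}(\lambda)=\det(\lambda I_n-A)$ equals $(-1)^k$ times the sum of all $k\times k$ principal minors of $A$. Expanding each principal minor $\det A[W]$ (with $|W|=k$) by the Leibniz formula, a permutation $\pi$ of $W$ contributes a nonzero term precisely when $(i,\pi(i))\in\cA$ for every $i\in W$; the cycle decomposition of such a $\pi$ then corresponds to a spanning union of vertex-disjoint directed cycles of the subdigraph induced on $W$, with fixed points of $\pi$ corresponding to self-loops (directed cycles of length $1$). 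Since a permutation of $W$ with $c$ cycles has sign $(-1)^{k-c}$, collecting terms yields the standard formula
\begin{equation}
\phi_{D_S}(\lambda)=\sum_{k=0}^{n}\Bigg(\sum_{L\in\cL_k}(-1)^{c(L)}\Bigg)\lambda^{n-k},
\end{equation}
where $\cL_k$ is the set of linear subdigraphs of $D_S$ on exactly $k$ vertices (spanning unions of vertex-disjoint directed cycles, self-loops allowed as $1$-cycles) and $c(L)$ is the number of cycle components of $L$. Here I would either cite the digraph Coefficients Theorem or verify this sign convention on a couple of small cases (e.g. a single $2$-cycle and a self-loop with a pendant arc).

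The key observation is then immediate: every arc occurring in a linear subdigraph $L$ lies on one of the directed cycles constituting $L$, and this is in particular a cycle of $D_S$ (a self-loop when it is a $1$-cycle). Consequently an arc of $D_S$ lying on \emph{no} cycle of $D_S$ occurs in \emph{no} $L\in\bigcup_k\cL_k$. At this point I would stress the only genuinely delicate bookkeeping, which is also where the statement could go wrong if mishandled: ``cycle'' must be read to include self-loops as directed cycles of length $1$, so that the hypothesis never deletes a self-loop. This is essential, since deleting a self-loop would change a diagonal entry of $A$ and hence the spectrum; the contribution of self-loops is exactly captured by the $1$-cycles in the formula above.

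Finally, since $D_S'$ is obtained from $D_S$ by removing only arcs lying on no cycle, no linear subdigraph of $D_S$ uses a deleted arc and no arc is added, whence $\cL_k(D_S)=\cL_k(D_S')$ for every $k$. Every coefficient in the displayed formula therefore coincides for $D_S$ and $D_S'$, giving $\phi_{D_S}(\lambda)=\phi_{D_S'}(\lambda)$. The combinatorial heart of the argument---that non-cyclic arcs are invisible to the characteristic polynomial---is thus a one-line deduction once the formula is in place, so I expect the self-loop bookkeeping, rather than any calculation, to be the main point requiring care. As an alternative route avoiding the coefficient expansion, I could reorder the vertices by strong components in a topological order of the condensation, making $A(D_S)$ block upper-triangular with the strong-component adjacency matrices on the diagonal; as all non-cyclic arcs lie strictly above the diagonal blocks, deleting any subset of them preserves both the block-triangular shape and the diagonal blocks, hence preserves the product $\prod_i\phi_{D_i}(\lambda)$ that equals the characteristic polynomial, yielding the same conclusion.
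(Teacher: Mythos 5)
Your proof is correct and follows essentially the same route as the paper: both invoke the coefficients theorem for digraphs (which you additionally derive from the principal-minor expansion rather than just citing) and then observe that an arc lying on no cycle of $D_S$ can occur in no linear subdigraph, so every coefficient of $\phi_{D_S}$ is unchanged. Your explicit care with self-loops as $1$-cycles and your alternative block-triangular argument via the condensation are both sound additions, but the core argument coincides with the paper's.
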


\begin{proof}
    Suppose $\phi_{D_S}(\lambda)=\lambda^n+a_1\lambda^{n-1}+\cdots+a_n$. By the coefficients theorem for digraphs\cite[Theorem 1.2]{cvetkovic1995spectra}, we have 
    \begin{align*}
        a_i=\sum_{L\in \cL_i} (-1)^{p(L)},
    \end{align*}
    where $\cL_i$ is the set of all linear sub-digraphs of $D$ with exactly $i$ vertices and $p(L)$ is the number of strong components of $L$. 

    The coefficients of the characteristic polynomial depend only on the cycle $C_k$ for $k \geq 1$ (with loops being regarded as $C_1$). Hence, deleting arcs that do not belong to any cycle would not affect the coefficients of $\phi_{D_S}(\lambda)$. Therefore, $\phi_{D_S}(\lambda)=\phi_{D_S'}(\lambda)$.
    
\end{proof}

Now we extend the definition of energy of simple digraphs by Pe\~{n}a and Rada \cite{pena2008energy} to digraphs with self-loops.
\begin{definition}
	\label{E(D_S)}
	Let $D_S$ be a digraph of order $n$, size $m$ and $\sigma$ self-loops, $0 \leq \sigma \leq n$.
	The energy of $D_S$, denoted as $E(D_S),$ is defined as 
	\[
    E(D_S)=\sum_{i=1}^n \left | \Re(\lambda_i)-\frac{\sigma}{n} \right|,
    \]
	where $\Re(\lambda_i)$ is the real part of $\lambda_i \text{ for } 1 \leq i \leq n$.
\end{definition}

\section{Some Properties of Energy of Digraphs with Self-Loops}
\label{sec3}

It is well known (cf. \cite[Theorem 1.9]{cvetkovic1995spectra}) that for a digraph with order $n$, we have
$\sum_{i=1}^n \lambda^k_i = \sum_{i=1}^n (A(D))_{ii}= w_k$, where $w_k$ is the number of closed walks of length $k$. In the case of digraphs with self-loops, we have the following results.

\begin{lemma}
	\label{sum_eigenvalue}
    Let $D_S$ be a digraph of order $n$ and $\sigma$ self-loops. Let $c_2$ be the number of cycle of length two in $D_S$. Then 
	\begin{enumerate}[(a)]
		\item $\displaystyle \sum_{i=1}^n \lambda_i(D_S) = \sigma.$
		\item $\displaystyle \sum_{i=1}^n \lambda^2_i(D_S) = c_2+\sigma.$
	\end{enumerate}
\end{lemma}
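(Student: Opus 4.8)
The plan is to derive both identities from the power-trace formula $\sum_{i=1}^n \lambda_i(D_S)^k = \tr\!\big(A(D_S)^k\big) = w_k$ recalled just above the lemma, where $w_k$ is the number of closed walks of length $k$. This identity is legitimate for the (possibly non-normal, complex-spectrum) matrix $A(D_S)$: by Schur triangularization $A(D_S)$ is unitarily similar to an upper-triangular matrix carrying the $\lambda_i$ on its diagonal, so $A(D_S)^k$ is upper-triangular with diagonal entries $\lambda_i^k$, and since the trace is similarity-invariant we get $\tr(A(D_S)^k)=\sum_i \lambda_i^k$. Because $A(D_S)$ is real, both sides are real numbers even when individual $\lambda_i$ are complex, so the two displayed right-hand sides make sense.

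For part (a) I would take $k=1$, so that $\sum_i \lambda_i = \tr(A(D_S)) = \sum_{i=1}^n a_{ii}$. By the definition of the adjacency matrix, $a_{ii}=1$ exactly when $(v_i,v_i)\in\cA$, i.e. when $v_i\in S$, and $a_{ii}=0$ otherwise; hence $\sum_i a_{ii}=|S|=\sigma$. (In the closed-walk picture this is just the statement that $w_1$ counts the self-loops.) For part (b) I take $k=2$ and expand $\tr(A(D_S)^2)=\sum_{i=1}^n (A(D_S)^2)_{ii}=\sum_{i=1}^n\sum_{j=1}^n a_{ij}a_{ji}$, then split the double sum according to whether $j=i$ or $j\neq i$. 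The diagonal part $\sum_i a_{ii}^2$ equals $\sum_i a_{ii}=\sigma$, since the entries lie in $\{0,1\}$ and are therefore idempotent. The off-diagonal part $\sum_{i\neq j} a_{ij}a_{ji}$ contributes a $1$ for each ordered pair of distinct vertices $v_i,v_j$ with both arcs $(v_i,v_j)$ and $(v_j,v_i)$ present, i.e. for each directed cycle of length two; by definition this count is $c_2$. Adding the two parts yields $\sum_i \lambda_i^2 = c_2+\sigma$, which in the closed-walk picture reads $w_2=\sigma+c_2$: a length-two closed walk either traverses a self-loop twice or runs back and forth along a $2$-cycle.

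The computation is essentially routine, and there is no serious obstacle; the two points demanding care are the validity of the power-trace identity for a non-symmetric real matrix with possibly complex spectrum (handled by Schur triangularization as above) and the precise bookkeeping behind $c_2$. In particular one must match the off-diagonal sum $\sum_{i\neq j}a_{ij}a_{ji}$ to the convention for counting length-two cycles so that exactly the constant $c_2$ — and not a multiple of it — appears. I would also note that part (a) immediately gives $\sum_i \Re(\lambda_i)=\Re\big(\sum_i\lambda_i\big)=\sigma$, the recentering underlying Definition \ref{E(D_S)}, and that together (a) and (b) supply precisely the two spectral moments needed later for the McClelland-type bound.
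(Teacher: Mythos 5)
Your proof is correct and takes essentially the same route as the paper's: both parts are read off from the trace identity $\sum_{i=1}^n \lambda_i^k = \tr\big(A(D_S)^k\big) = w_k$ recalled before the lemma, with $k=1$ giving the loop count $\sigma$ and $k=2$ splitting closed walks into the forms $vvv$ and $vwv$, which is exactly your diagonal/off-diagonal decomposition of $\tr\big(A(D_S)^2\big)$. Your care about the counting convention for $c_2$ (ordered pairs, i.e.\ closed walks $vwv$, rather than unordered $2$-cycle subgraphs) matches the convention the paper implicitly uses in its own proof, so there is no discrepancy.
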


\begin{proof}
	Part (a) is clear as $\sum_{i=1}^n \lambda_i =  \sum_{i=1}^n (A(D_S))_{ii} = \sigma.$ Part (b) follows immediately by noting that closed walks of length two are either in the form of $vvv$ or $vwv$ for distinct vertices $v,w$. 
\end{proof}

From Definition \ref{E(D_S)}, we have the following straightforward lemma:
\begin{lemma}\label{lemma_E(D_S)=E(D)}
	 Suppose $D_S$ is a digraph with order $n$ and $\sigma$ self-loops. Then 
     $E(D_S)=E(D)$ if $\sigma=0$ or $\sigma=n$, where $E(D)$ is the energy of simple digraph defined in \cite{pena2008energy}.
\end{lemma}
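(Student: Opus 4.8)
The plan is to dispose of the two boundary values of $\sigma$ separately, in each case reducing the spectrum of $D_S$ to that of the underlying simple digraph $D$ and checking that the centering constant $\sigma/n$ behaves correctly.

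First I would treat $\sigma=0$. Here $S=\emptyset$, so by definition $D_S=D$ is itself a simple digraph and $A(D_S)=A(D)$; moreover the centering constant $\sigma/n$ vanishes. Definition \ref{E(D_S)} then reads $E(D_S)=\sum_{i=1}^n\lvert\Re(\lambda_i)\rvert$, which is precisely the Pe\~{n}a--Rada energy $E(D)$. This direction is immediate and requires no further argument.

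The substantive case is $\sigma=n$. Now every vertex carries a self-loop, so the diagonal of $A(D_S)$ is entirely ones, whereas the underlying simple digraph $D$ obtained by deleting all $n$ loops has adjacency matrix with zero diagonal; consequently $A(D_S)=A(D)+I_n$. The key observation is that adding the identity translates the entire spectrum: if $\mu_1,\dots,\mu_n$ are the eigenvalues of $A(D)$, then the eigenvalues of $A(D_S)$ are exactly $\lambda_i=\mu_i+1$, whence $\Re(\lambda_i)=\Re(\mu_i)+1$ for each $i$.

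Finally I would combine these with the fact that $\sigma/n=1$ in this case, so that $\Re(\lambda_i)-\sigma/n=\Re(\mu_i)$ for every $i$, and therefore $E(D_S)=\sum_{i=1}^n\lvert\Re(\mu_i)\rvert=E(D)$. There is no genuine obstacle here; the only point worth emphasizing is that the uniform shift of the real parts induced by the loops is exactly cancelled by subtracting $\sigma/n=1$, which is precisely the normalization built into Definition \ref{E(D_S)}. The two elementary facts to state explicitly are the identity $A(D_S)=A(D)+I_n$ and the translation of eigenvalues under addition of a scalar matrix.
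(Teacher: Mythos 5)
Your proposal is correct and follows essentially the same argument as the paper: the case $\sigma=0$ is immediate from the definitions, and for $\sigma=n$ the paper also uses the identity $A(D_S)=A(D)+I_n$ together with the resulting shift $\lambda_i(A(D_S))=\lambda_i(A(D))+1$, which the normalization $\sigma/n=1$ cancels exactly. Nothing is missing; your write-up matches the paper's proof step for step.
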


\begin{proof}
		The case $\sigma=0$ is trivial. Suppose $\sigma=n$. Then $A(D_S)=A(D)+I_n$. So $\lambda(A(D_S))=\lambda(A(D))+1$. Thus,
		\begin{align*}
			E(D_S) &= \sum_{i=1}^n \left | \Re(\lambda_i(A(D_S)))-\frac{n}{n} \right|\\           
			&= \sum_{i=1}^n \left | \Re(\lambda_i(A(D)))+1-1 \right| \\        
			&= \sum_{i=1}^n \left | \Re(\lambda_i(A(D))) \right| \\            
			&= E(D).
		\end{align*}
		
\end{proof}

\begin{proposition}\label{energy+}
Let $D_S$ be a digraph of order $n$ and $\sigma$ self-loops. Then, the energy of $D_S$ can be expressed as
    \begin{equation}\label{eq_energy_2_times}
    E(D_S) = 2\sum_{\Re(\lambda_i)> \frac{\sigma}{n}} \left( \Re(\lambda_i)-\frac{\sigma}{n} \right).
    \end{equation}

\end{proposition}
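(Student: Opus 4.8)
The plan is to reduce the claim to a single balancing identity for the deviations $\Re(\lambda_i)-\tfrac{\sigma}{n}$ about their mean. Starting from Definition \ref{E(D_S)}, I would split the index set $[n]$ according to the sign of $\Re(\lambda_i)-\tfrac{\sigma}{n}$, writing
\[
  P=\sum_{\Re(\lambda_i)>\frac{\sigma}{n}}\left(\Re(\lambda_i)-\frac{\sigma}{n}\right),
  \qquad
  N=\sum_{\Re(\lambda_i)<\frac{\sigma}{n}}\left(\frac{\sigma}{n}-\Re(\lambda_i)\right),
\]
so that the indices with $\Re(\lambda_i)=\tfrac{\sigma}{n}$ contribute nothing and $E(D_S)=P+N$. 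The whole proposition then amounts to showing $P=N$.

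The key input is Lemma \ref{sum_eigenvalue}(a), which gives $\sum_{i=1}^n\lambda_i(D_S)=\sigma$. Since $\sigma$ is a real number, the imaginary parts of the eigenvalues cancel and taking real parts yields $\sum_{i=1}^n\Re(\lambda_i)=\sigma$. Consequently
\[
  \sum_{i=1}^n\left(\Re(\lambda_i)-\frac{\sigma}{n}\right)
  =\sigma-n\cdot\frac{\sigma}{n}=0.
\]
Regrouping this vanishing sum by the same sign classification gives exactly $P-N=0$, i.e. $P=N$, because the zero-deviation terms drop out of both the absolute-value sum and this signed sum.

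Combining the two displays, $E(D_S)=P+N=2P$, which is precisely \eqref{eq_energy_2_times}. I do not expect any genuine obstacle here: the argument is the standard observation that deviations from the mean sum to zero, so the positive and negative deviations have equal total mass. The only point deserving a word of justification is that the trace being the real quantity $\sigma$ forces $\sum_i\Re(\lambda_i)=\sigma$ (rather than merely $\sum_i\lambda_i$ being real), which is immediate but is the one place where the self-loop normalization $\tfrac{\sigma}{n}$ in the definition of $E(D_S)$ is used in an essential way.
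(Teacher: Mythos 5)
Your proposal is correct and follows essentially the same route as the paper's proof: both invoke Lemma \ref{sum_eigenvalue}(a) to get $\sum_{i=1}^n \Re(\lambda_i)=\sigma$, conclude that the deviations $\Re(\lambda_i)-\tfrac{\sigma}{n}$ sum to zero, and split by sign to obtain the factor of $2$. The only cosmetic difference is that you justify $\sum_i\Re(\lambda_i)=\sigma$ by taking real parts of the trace identity, whereas the paper appeals to the conjugate pairing of complex eigenvalues; both are immediate and equivalent.
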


\begin{proof}
By writing $\lambda_i= \Re(\lambda_i) + i \Im(\lambda_i),$  it follows from Lemma \ref{sum_eigenvalue}(a) that 
\[
\sum^n_{i=1} \left(\Re(\lambda_i)+i\Im(\lambda_i) \right)= \sigma.
\]
Since every complex eigenvalue comes with a conjugate pair eigenvalue, it further implies that $\sum^n_{i=1} \Re(\lambda_i)=\sigma.$ Therefore, 
\[
\sum_{i=1}^n \left( \Re(\lambda_i) - \frac{\sigma}{n} \right)=0.
\]
Since 
\[
\sum_{i=1}^n \left( \Re(\lambda_i) - \frac{\sigma}{n} \right) 
=	\sum_{\Re(\lambda_i)> \frac{\sigma}{n}} \left( \Re(\lambda_i)-\frac{\sigma}{n} \right) + \sum_{\Re(\lambda_i) \leq \frac{\sigma}{n}} \left( \Re(\lambda_i)-\frac{\sigma}{n} \right),
\]
it holds that
\[
\sum_{\Re(\lambda_i)> \frac{\sigma}{n}} \left( \Re(\lambda_i)-\frac{\sigma}{n} \right) = - \sum_{\Re(\lambda_i) \leq \frac{\sigma}{n}} \left( \Re(\lambda_i)-\frac{\sigma}{n} \right).
\]
Thus, 
\[
E(D_S) = 2\sum_{\Re(\lambda_i)> \frac{\sigma}{n}} \left( \Re(\lambda_i)-\frac{\sigma}{n} \right).
\]
\end{proof}

\begin{remark}
Since $\sum_{i=1}^n \left ( \Re(\lambda_i)+i\Im(\lambda_i) \right )^2 = \sum_{i=1}^n \left( \Re(\lambda_i)^2-\Im(\lambda_i)^2 + 2i\Re(\lambda_i)\Im(\lambda_i)  \right),$ it follows from Lemma~\ref{sum_eigenvalue}(b) that
\begin{equation}\label{eq:ReImineq1}
\sum^n_{i=1} \Re(\lambda_i)^2-\Im(\lambda_i)^2 = c_2+\sigma. 
\end{equation}
\end{remark}		

Moreover, we have the following inequality:
\begin{proposition}

	\label{sum_of_re&im}
	Let $D_S$ be a digraph of order $n,$ size $m$ and $\sigma$ self-loops. Then,
	\begin{equation}\label{eq:ReImineq2}
	\sum_{i=1}^n  \Re(\lambda_i)^2+\Im(\lambda_i)^2  \leq m+\sigma.
	\end{equation}
\end{proposition}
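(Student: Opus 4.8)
The plan is to recognize the left-hand side as $\sum_{i=1}^n |\lambda_i|^2$, since $\Re(\lambda_i)^2+\Im(\lambda_i)^2=|\lambda_i|^2$, and then to bound this sum of squared moduli by the squared Frobenius norm of the adjacency matrix $A(D_S)$ via Schur's inequality. Note that the earlier identity \eqref{eq:ReImineq1} only controls $\sum_i(\Re(\lambda_i)^2-\Im(\lambda_i)^2)=c_2+\sigma$, so it cannot by itself yield the $+$ version; a genuine matrix-norm estimate is needed.

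First I would recall Schur's inequality, which is a direct consequence of the Schur triangularization theorem: any complex square matrix $A$ factors as $A=UTU^*$ with $U$ unitary and $T$ upper triangular having the eigenvalues $\lambda_1,\ldots,\lambda_n$ on its diagonal. Since the Frobenius norm is invariant under unitary conjugation,
\[
\|A(D_S)\|_F^2=\|T\|_F^2=\sum_{i=1}^n|\lambda_i|^2+\sum_{i<j}|t_{ij}|^2\ \geq\ \sum_{i=1}^n|\lambda_i|^2 .
\]

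Second, I would compute $\|A(D_S)\|_F^2$ explicitly. Because $A(D_S)=(a_{ij})$ is a $(0,1)$-matrix, every entry satisfies $a_{ij}^2=a_{ij}$, so $\|A(D_S)\|_F^2=\sum_{i,j}a_{ij}^2=\sum_{i,j}a_{ij}$ simply counts the number of $1$'s, i.e.\ the total number of arcs. These split into the off-diagonal $1$'s, which count the arcs between distinct vertices and number $|\cA'|=m$, and the diagonal $1$'s, which count the self-loops and number $|S|=\sigma$. Hence $\|A(D_S)\|_F^2=m+\sigma$. Combining this with Schur's inequality gives $\sum_{i=1}^n\bigl(\Re(\lambda_i)^2+\Im(\lambda_i)^2\bigr)=\sum_{i=1}^n|\lambda_i|^2\leq m+\sigma$, as desired.

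There is no serious obstacle here; the argument is essentially Schur's inequality applied to a $(0,1)$-matrix. The only point demanding care is the bookkeeping for $\|A(D_S)\|_F^2$, ensuring that the off-diagonal arcs (contributing $m$) and the diagonal self-loops (contributing $\sigma$) are each counted exactly once. As a byproduct of the triangularization argument, one sees that equality holds precisely when the strictly upper triangular part of $T$ vanishes, that is, when $A(D_S)$ is a \emph{normal} matrix.
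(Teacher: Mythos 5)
Your proof is correct and follows essentially the same route as the paper's: Schur triangularization, unitary invariance of the Frobenius norm, and the observation that for a $(0,1)$-matrix the squared Frobenius norm counts the $m+\sigma$ arcs. Your added remark on the equality case (normality of $A(D_S)$) is a nice bonus the paper omits, but otherwise the two arguments coincide.
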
 

\begin{proof}
	By the Schur triangularization theorem (cf. \cite[Theorem 2.3.1(a)]{HornJohnson2013}), the adjacency matrix $A(D_S)$ can be decomposed as $A(D_S)=U^*TU$ for some unitary matrix $U$ and upper triangular matrix $T$ with diagonal entries being the eigenvalues of $A(D_S)$.\\
	Thus, 
	\begin{align*}
	m+\sigma &=      \sum_{1\leq i,j \leq n} a_{ij}     
	=      \sum_{1\leq i,j \leq n} |a_{ij}|^2   
	=      \sum_{1\leq i,j \leq n} |t_{ij}|^2 \\
	&\geq   \sum_{i+1}^n |t_{ii}|^2  
	=      \sum_{i+1}^n |\lambda_{i}|^2
	= \sum_{i=1}^n \left ( \Re(\lambda_i)^2+\Im(\lambda_i)^2 \right). \qedhere  
	\end{align*}
\end{proof}

 	It follows immediately from equations \eqref{eq:ReImineq1} and \eqref{eq:ReImineq2} that:
 	
	\begin{corollary} \label{ReImineq}
	Let $D_S$ be a digraph of order $n$ with $\sigma$ self-loops and $c_2$ 2-cycles, and with eigenvalues $\lambda_i, 1 \leq i \leq n.$  Then,
	\begin{align*}
		\sum_{i=1}^n  \Re(\lambda_i)^2 &\leq \frac{m + c_2 + 2\sigma}{2}, \\ 
		\sum_{i=1}^n  \Im(\lambda_i)^2 &\leq \frac{m - c_2}{2}.
	\end{align*}
		 
	\end{corollary}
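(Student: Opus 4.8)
The plan is to treat the two quantities $R:=\sum_{i=1}^n \Re(\lambda_i)^2$ and $I:=\sum_{i=1}^n \Im(\lambda_i)^2$ as two unknowns and to extract each of them by forming suitable linear combinations of the identity \eqref{eq:ReImineq1} and the inequality \eqref{eq:ReImineq2}. In this notation, \eqref{eq:ReImineq1} reads $R-I=c_2+\sigma$ (an exact equality), while \eqref{eq:ReImineq2} reads $R+I\le m+\sigma$.

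To obtain the first bound, I would add these two relations: since $R-I=c_2+\sigma$ and $R+I\le m+\sigma$, summing gives
\begin{equation*}
2R=(R-I)+(R+I)\le (c_2+\sigma)+(m+\sigma)=m+c_2+2\sigma,
\end{equation*}
and dividing by $2$ yields $R\le \tfrac{1}{2}(m+c_2+2\sigma)$, which is the first displayed inequality. To obtain the second bound, I would instead subtract the exact identity from the inequality: from $R+I\le m+\sigma$ and $R-I=c_2+\sigma$ we get
\begin{equation*}
2I=(R+I)-(R-I)\le (m+\sigma)-(c_2+\sigma)=m-c_2,
\end{equation*}
so $I\le \tfrac{1}{2}(m-c_2)$, as claimed.

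There is essentially no obstacle here: the work has already been done in establishing \eqref{eq:ReImineq1} and \eqref{eq:ReImineq2}, and the corollary is a two-line consequence of elementary linear algebra on the pair $(R,I)$. The only point worth a brief remark is that the direction of each inequality is preserved correctly—one must add (rather than subtract) to isolate $R$ and subtract (rather than add) to isolate $I$, using that $R-I$ is an equality so it may be moved to either side without reversing the inequality sign. As a sanity check, the second bound forces $m\ge c_2$, which is automatic since every $2$-cycle consists of two arcs between distinct vertices and hence contributes to the size $m$, confirming the bound is consistent.
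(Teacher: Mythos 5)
Your proof is correct and is exactly the argument the paper intends: the corollary is stated as an immediate consequence of the identity \eqref{eq:ReImineq1} and the inequality \eqref{eq:ReImineq2}, obtained by adding them to isolate $\sum_{i=1}^n \Re(\lambda_i)^2$ and subtracting to isolate $\sum_{i=1}^n \Im(\lambda_i)^2$, precisely as you do. Your closing sanity check that $m \geq c_2$ is a nice touch but not needed; the approaches coincide.
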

	
	For a digraph with self-loops $D_S$, let $D_1, D_2,..., D_k$ be its strong components, each with order $n_i$ and $\sigma_i$ self-loops. Then by \cite[Theorem 2.4]{cvetkovic1995spectra},
	\[
	\phi_{D_S}(\lambda)=\phi_{D_1}(\lambda)\cdot \phi_{D_2}(\lambda) \cdot \cdots \cdot \phi_{D_k}(\lambda)
	\]
	holds. Thus, the spectrum of $D_S$ is the union of the spectrum of its strong components. For each $i=1,\ldots, k$, let $\{\lambda_{i,j} \}_{j=1}^{n_i}$ be the eigenvalues of $D_i$, then the energy of $D_S$ can be written as 
    \begin{equation}\label{energy_eq}
        E(D_S)= \sum_{i=1}^k \sum_{j=1}^{n_i} \left| \Re(\lambda_{i,j})-\frac{\sigma}{n} \right|.    
    \end{equation}
	
	While the energy of each $D_i$ is 
    \begin{equation}\label{sum_energy_components_eq}
        E(D_i)= \sum_{j=1}^{n_i} \left| \Re(\lambda_{i,j})-\frac{\sigma_i}{n_i} \right|.
    \end{equation}
    For $S=\emptyset$, Rada \cite{rada2009mcclelland} showed that 
    $$E(D)=E(D_1)+ E(D_2)+ \cdots + E(D_k).$$
    However, it is not generally true that 
	$$E(D_S)=E(D_1)+ E(D_2)+ \cdots + E(D_k)$$
    holds for nonempty $S$.
    
    We give a simple example in which $E(D_S)<\sum_{i=1}^{k} E(D_i)$ is possible.
    \begin{example}
    Consider $D_S=\overleftrightarrow{K_2^+} \cup (\overrightarrow{C_3})_{S'}$, where $\overleftrightarrow{K_2^+}$ is symmetrization of the complete graph $K_2$ with one self-loop and $(\overrightarrow{C_3})_{S'}$ is a cycle digraph of length three with two self-loops. It can be shown that 
    \begin{align*}
        \mathrm{Spec}(\overleftrightarrow{K_2^+}) &= \left\{\frac{1+\sqrt{5}}{2}, \frac{1-\sqrt{5}}{2} \right\}, \\
        \mathrm{Spec}(\overrightarrow{C_3})_{S'} &= \left\{ 1.7549, 0.1226+0.7449i,  0.1226-0.7449i    \right\}.
    \end{align*}
    By \cite[Theorem 2.4]{cvetkovic1995spectra}, 
    $$\mathrm{Spec}(D_S) =  \mathrm{Spec}(\overleftrightarrow{K_2}) \bigcup \mathrm{Spec}(\overleftrightarrow{K_2^+}).$$
    By simple computation, we have 
    \begin{align*}
        E\left(\overleftrightarrow{K_2^+}\right) &=\sqrt{5},\\
        E\left((\overrightarrow{C_3})_{S'}\right) &= 2.1764,\\
        E(D_S) &= 4.3458 < 4.4125=E\left(\overleftrightarrow{K_2^+}\right)+E\left((\overrightarrow{C_3})_{S'}\right). 
    \end{align*}
    \end{example}


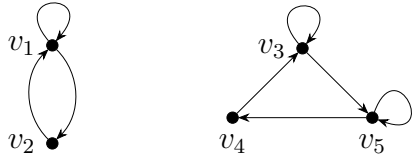
\begin{figure}[h]
\vspace{-1em}
\begin{minipage}[b]{0.23\linewidth}
    \centering
      \begin{tikzpicture}[node distance={13mm}, very thin, main/.style = {draw, circle, fill, inner sep=1.5pt, minimum size=0pt}] 
                \node[main] (1) [label=left:$v_1$]{}; 
                \node[main] (2) [below of=1]
                [label=left:$v_2$]{};

            \draw[-Stealth] (1) to [bend left =45](2); 
            \draw[-Stealth] (2) to [bend left =45] (1); 
            \draw[-Stealth] (1) to [out=130,in=50,looseness=22] (1);
        \end{tikzpicture}
\end{minipage}
\begin{minipage}[b]{0.23\linewidth}
    \centering
      \begin{tikzpicture}[node distance={13mm}, very thin, main/.style = {draw, circle, fill, inner sep=1.5pt, minimum size=0pt}] 
                \node[main] (1) [label=left:$v_3$]{}; 
                \node[main] (2) [below left of=1]
                [label=below :$v_4$]{};
                \node[main] (3) [below right of=1][label=below:$v_5$]{};

            \draw[-Stealth] (1) -- (3);
            \draw[-Stealth] (3) -- (2); 
            \draw[-Stealth] (2) -- (1);             
            \draw[-Stealth] (1) to [out=130,in=50,looseness=22] (1);
            \draw[-Stealth] (3) to [out=70,in=-20,looseness=20] (3);  
        \end{tikzpicture}
\end{minipage}
\caption{$D_S=\overleftrightarrow{K_2^+} \cup (\overrightarrow{C_3})_{S'}$.}
\end{figure}

Nevertheless, we have the following result.
	\begin{proposition}
		Let $D_S$ be a digraph of order $n$ and $\sigma$ self-loops. Let $D_i, 1 \leq i \leq k,$ be its strong components, each with order $n_i$ and $\sigma_i$ self-loops. Then
		$$\left|E(D_S)-\sum_{i=1}^k E(D_i) \right| \leq 2\sigma.$$
	\end{proposition}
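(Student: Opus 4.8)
The plan is to work directly from the explicit expressions \eqref{energy_eq} and \eqref{sum_energy_components_eq}, which are available because the spectrum of $D_S$ is exactly the disjoint union of the spectra of its strong components. Writing the difference as a single double sum over all eigenvalues $\lambda_{i,j}$ of the components,
\[
E(D_S)-\sum_{i=1}^k E(D_i)=\sum_{i=1}^k\sum_{j=1}^{n_i}\left(\left|\Re(\lambda_{i,j})-\frac{\sigma}{n}\right|-\left|\Re(\lambda_{i,j})-\frac{\sigma_i}{n_i}\right|\right),
\]
one sees that the two energies differ only in which ``center'', $\sigma/n$ or $\sigma_i/n_i$, is subtracted from each real part. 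This reduces the entire problem to controlling, term by term, how much each summand changes when the subtracted constant is shifted.

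The key tool is the reverse triangle inequality $\bigl||a-b|-|a-c|\bigr|\le|b-c|$. Applying it with $a=\Re(\lambda_{i,j})$, $b=\sigma/n$, and $c=\sigma_i/n_i$, then taking absolute values and invoking the triangle inequality on the outer sum, gives
\[
\left|E(D_S)-\sum_{i=1}^k E(D_i)\right|\le\sum_{i=1}^k\sum_{j=1}^{n_i}\left|\frac{\sigma}{n}-\frac{\sigma_i}{n_i}\right|=\sum_{i=1}^k n_i\left|\frac{\sigma}{n}-\frac{\sigma_i}{n_i}\right|=\sum_{i=1}^k\left|\frac{n_i\sigma}{n}-\sigma_i\right|,
\]
where the inner sum over $j$ collapses to a factor $n_i$ because the summand no longer depends on $j$.

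Finally I would invoke the two combinatorial identities arising from the strong components partitioning the vertex set, namely $\sum_{i=1}^k n_i=n$ and $\sum_{i=1}^k\sigma_i=\sigma$ (every vertex, and hence every self-loop, belongs to exactly one strong component). One more application of the elementary bound $|x-y|\le x+y$ for nonnegative $x,y$ then yields
\[
\sum_{i=1}^k\left|\frac{n_i\sigma}{n}-\sigma_i\right|\le\sum_{i=1}^k\left(\frac{n_i\sigma}{n}+\sigma_i\right)=\frac{\sigma}{n}\cdot n+\sigma=2\sigma,
\]
which is the claimed bound. I expect no genuine obstacle here: the argument is a clean two-step triangle-inequality estimate, and the only point requiring care is the bookkeeping that $\{n_i\}$ and $\{\sigma_i\}$ sum to $n$ and $\sigma$, respectively. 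I would also remark that the last step $|n_i\sigma/n-\sigma_i|\le n_i\sigma/n+\sigma_i$ is wasteful whenever some $\sigma_i>0$, so the bound $2\sigma$ is generically far from tight, consistent with the small gap seen in the preceding example.
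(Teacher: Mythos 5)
Your proof is correct, and it takes a cleaner route than the paper's. The paper proves the two one-sided inequalities separately: for each strong component it compares both energies to the auxiliary quantity $\sum_{j=1}^{n_i}|\Re(\lambda_{i,j})|$, bounding it from above by $\sum_{j}|\Re(\lambda_{i,j})-\tfrac{\sigma}{n}|+\tfrac{\sigma}{n}n_i$ (triangle inequality) and from below by $E(D_i)-\sigma_i$ (reverse triangle inequality), sums over $i$ to get $E(D_S)-\sum_i E(D_i)\geq -2\sigma$, and then repeats the whole argument with the roles of the centers $\tfrac{\sigma}{n}$ and $\tfrac{\sigma_i}{n_i}$ interchanged to get the opposite bound. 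You instead apply the reverse triangle inequality $\bigl||a-b|-|a-c|\bigr|\leq|b-c|$ once, termwise, directly between the two centers, which handles both directions simultaneously and collapses the estimate to $\bigl|E(D_S)-\sum_i E(D_i)\bigr|\leq\sum_{i=1}^k\bigl|\tfrac{n_i\sigma}{n}-\sigma_i\bigr|\leq 2\sigma$. In effect the paper routes the comparison through the center $0$ (paying $\tfrac{\sigma}{n}+\tfrac{\sigma_i}{n_i}$ per eigenvalue), while you compare the centers directly (paying only $\bigl|\tfrac{\sigma}{n}-\tfrac{\sigma_i}{n_i}\bigr|$); your intermediate bound $\sum_i\bigl|\tfrac{n_i\sigma}{n}-\sigma_i\bigr|$ is therefore genuinely sharper, and it makes transparent something the paper's argument obscures: the deviation from additivity is controlled by the imbalance of loop densities across components, and vanishes entirely when $\tfrac{\sigma_i}{n_i}=\tfrac{\sigma}{n}$ for all $i$, recovering Rada's additivity $E(D_S)=\sum_i E(D_i)$ in that case. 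The only hypotheses you use, $\sum_i n_i=n$ and $\sum_i\sigma_i=\sigma$, are exactly the partition facts the paper also relies on, so there is no gap.
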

	
	\begin{proof}
		 For fixed $i$, using the triangle inequality,
		\begin{align}\label{streq:1}
			\sum_{j=1}^{n_i} \left| \Re(\lambda_{i,j})\right|
			&= \sum_{j=1}^{n_i} \left| \Re(\lambda_{i,j})-\frac{\sigma}{n} + \frac{\sigma}{n}  \right| \nonumber\\
			&\leq \sum_{j=1}^{n_i} \left( \left| \Re(\lambda_{i,j})-\frac{\sigma}{n}  \right| + \left| \frac{\sigma}{n}  \right| \right) \nonumber \\
			&= \sum_{j=1}^{n_i} \left| \Re(\lambda_{i,j})-\frac{\sigma}{n}  \right| + \frac{\sigma}{n}n_i.
		\end{align}
		
		On the other hand, using the reverse triangle inequality,
		\begin{align}\label{streq:2}
			\sum_{j=1}^{n_i} \left| \Re(\lambda_{i,j})\right| &= \sum_{j=1}^{n_i} \left| \Re(\lambda_{i,j})-\frac{\sigma_i}{n_i} + \frac{\sigma_i}{n_i}  \right|\nonumber \\
			&\geq \sum_{j=1}^{n_i} \left( \left| \Re(\lambda_{i,j})-\frac{\sigma_i}{n_i}  \right| - \left| \frac{\sigma_i}{n_i} \right| \right) \nonumber\\
			&=E(D_i) - \sigma_i.
		\end{align}
		Summing \eqref{streq:1} and \eqref{streq:2} over all $i,$  we obtain
		\begin{align*}
			E(D_S)+\sigma 
			&\geq \sum_{i=1}^k \sum_{j=1}^{n_i} \left| \Re(\lambda_{i,j})\right|
			\geq \sum_{i=1}^k \left[ E(D_i) - \sigma_i \right] 
			= \sum_{i=1}^k E(D_i) - \sigma.
		\end{align*}
		Hence, 
		\[
		E(D_S) -\sum_{i=1}^k E(D_i) \geq -2\sigma.
		\]
		By replacing $\dfrac{\sigma}{n}$ with $\dfrac{\sigma_i}{n_i}$ (and vice versa) in each inequality, we have 
		\[
		2\sigma \geq E(D_S) -\sum_{i=1}^k E(D_i).
		\]
		Therefore, 
		\[
		\left|E(D_S)-\sum_{i=1}^k E(D_i) \right| \leq 2\sigma. \qedhere
		\]  
	\end{proof}

The following theorem features a necessary condition on when the energy of a digraph with self-loops is less than or equal to the sum of energies of its strong components:

\begin{theorem}\label{suffcond1}
	Suppose $D_S$ is a digraph of order $n$ with $\sigma$ self-loops. Let $D_i, 1 \leq i \leq k,$ be its strong components, each with order $n_i$ and $\sigma_i$ self-loops, such that $n=\sum_{i=1}^k n_i$ and $\sigma=\sum_{i=1}^k \sigma_i$. Let $1\leq l <k$. Suppose $\frac{\sigma_i}{n_i} > \frac{\sigma}{n}$ holds for $i=1, \ldots, l$ and $\frac{\sigma_i}{n_i} \leq \frac{\sigma}{n}$ holds for $i=l+1, \ldots, k$. Let 
	\begin{align*}
	A_i &=\left\{ j\in[n_i] :      \Re(\lambda_{i,j})> \frac{\sigma}{n}\right\}, \\
        B_i &=\left\{ j\in[n_i] : \Re(\lambda_{i,j})> \frac{\sigma_i}{n_i} \right\}.
	\end{align*}  
    Then the following implication
    \begin{equation*}
        \sum_{i=1}^l \left( \frac{\sigma_i}{n_i}-\frac{\sigma}{n}\right) |A_i| \leq \sum_{i=l+1}^k \left( \frac{\sigma}{n}-\frac{\sigma_i}{n_i} \right)|A_i| \implies E(D_S) \leq \sum_{i=1}^k E(D_i)
    \end{equation*}
    holds. Moreover, if the inequality on the left hand side of the implication is strict, then the inequality on the right hand side will also be strict.
\end{theorem}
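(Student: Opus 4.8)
The plan is to rewrite both sides of the desired inequality using Proposition~\ref{energy+}, which expresses the energy as twice the sum of the positive excesses of $\Re(\lambda)$ over the relevant threshold. Applying it to $D_S$ with threshold $\tfrac{\sigma}{n}$, and to each strong component $D_i$ with its own threshold $\tfrac{\sigma_i}{n_i}$, and using that $\mathrm{Spec}(D_S)=\bigcup_{i} \mathrm{Spec}(D_i)$, I would record the two identities
$$\tfrac12 E(D_S)=\sum_{i=1}^k\sum_{j\in A_i}\Bigl(\Re(\lambda_{i,j})-\tfrac{\sigma}{n}\Bigr),\qquad \tfrac12\sum_{i=1}^k E(D_i)=\sum_{i=1}^k\sum_{j\in B_i}\Bigl(\Re(\lambda_{i,j})-\tfrac{\sigma_i}{n_i}\Bigr).$$
It then suffices to show that $\Delta:=\tfrac12\sum_{i} E(D_i)-\tfrac12 E(D_S)=\sum_{i}\delta_i$ is nonnegative, where $\delta_i$ is the contribution of the $i$-th component to the difference.

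The key structural observation is that $A_i$ and $B_i$ nest, with the direction of containment determined by which group $i$ lies in: for $i\le l$ one has $\tfrac{\sigma_i}{n_i}>\tfrac{\sigma}{n}$, so $B_i\subseteq A_i$; for $i>l$ one has $\tfrac{\sigma_i}{n_i}\le\tfrac{\sigma}{n}$, so $A_i\subseteq B_i$. I would compute $\delta_i$ in each case by splitting the larger index set into the smaller one together with the symmetric difference. For $i\le l$, the terms on $B_i$ telescope to $|B_i|\bigl(\tfrac{\sigma}{n}-\tfrac{\sigma_i}{n_i}\bigr)$, while on $A_i\setminus B_i$ every eigenvalue satisfies $\tfrac{\sigma}{n}<\Re(\lambda_{i,j})\le\tfrac{\sigma_i}{n_i}$, so its excess is at most $\tfrac{\sigma_i}{n_i}-\tfrac{\sigma}{n}$; combining gives $\delta_i\ge-\bigl(\tfrac{\sigma_i}{n_i}-\tfrac{\sigma}{n}\bigr)|A_i|$. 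Symmetrically, for $i>l$ the terms on $A_i$ telescope to $|A_i|\bigl(\tfrac{\sigma}{n}-\tfrac{\sigma_i}{n_i}\bigr)\ge 0$, and the leftover sum over $B_i\setminus A_i$ is a sum of strictly positive excesses, so $\delta_i\ge\bigl(\tfrac{\sigma}{n}-\tfrac{\sigma_i}{n_i}\bigr)|A_i|$.

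Summing the two families of bounds yields
$$\Delta\ \ge\ -\sum_{i=1}^l\Bigl(\tfrac{\sigma_i}{n_i}-\tfrac{\sigma}{n}\Bigr)|A_i|+\sum_{i=l+1}^k\Bigl(\tfrac{\sigma}{n}-\tfrac{\sigma_i}{n_i}\Bigr)|A_i|,$$
whose right-hand side is exactly the hypothesis after transposition; hence $\Delta\ge0$, i.e.\ $E(D_S)\le\sum_{i}E(D_i)$. For the strict statement, a strict hypothesis makes this final bound strictly positive, forcing $\Delta>0$ and so $E(D_S)<\sum_i E(D_i)$. The main obstacle — indeed the only delicate point — is controlling the boundary eigenvalues lying in the symmetric differences $A_i\setminus B_i$ and $B_i\setminus A_i$: one must use that their real parts lie strictly between $\tfrac{\sigma}{n}$ and $\tfrac{\sigma_i}{n_i}$ to obtain the correct one-sided bounds, and keep the two groups $i\le l$ and $i>l$ straight since they carry opposite inequalities. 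Everything else is routine bookkeeping with the identities furnished by Proposition~\ref{energy+}.
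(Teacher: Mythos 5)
Your proposal is correct and takes essentially the same route as the paper's proof: both apply Proposition~\ref{energy+} to $D_S$ (threshold $\tfrac{\sigma}{n}$) and to each strong component (threshold $\tfrac{\sigma_i}{n_i}$), exploit the nestings $B_i \subseteq A_i$ for $i \le l$ and $A_i \subseteq B_i$ for $i > l$, bound the boundary eigenvalues in the symmetric differences by exactly the same one-sided estimates, and then invoke the hypothesis, yielding in particular the identical intermediate bound $E(D_S) \le \sum_{i=1}^k E(D_i) + 2\sum_{i=1}^l \bigl(\tfrac{\sigma_i}{n_i}-\tfrac{\sigma}{n}\bigr)|A_i| + 2\sum_{i=l+1}^k \bigl(\tfrac{\sigma_i}{n_i}-\tfrac{\sigma}{n}\bigr)|A_i|$. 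The only difference is organizational: you bound the componentwise differences $\delta_i$ separately instead of expanding $E(D_S)$ in one long chain of equalities, which does not change the substance.
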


    Before we start the proof, note that it is impossible for $\frac{\sigma_i}{n_i} < \frac{\sigma}{n}$ to hold for all $i\in [k]$ since this would implies $\sigma=\sum_{i=1}^k \sigma_i < \frac{\sigma}{n}  \sum_{i=1}^k n_i= \sigma$.
	Similarly for $\frac{\sigma_i}{n_i} > \frac{\sigma}{n}$. However, it is possible that $\frac{\sigma_i}{n_i} = \frac{\sigma}{n}$ holds for all $i \in [k]$ (For example, $\frac{n}{2} \widetilde{K_2}$). This justifies the restriction on $l$.

\begin{proof}
	From the definitions of $A_i$ and $B_i,$ we have $B_i \subset A_i$ for $i=1,\ldots,l$ and $A_i \subset B_i$ for $i=l+1, \ldots, k$.
	From equation \eqref{eq_energy_2_times},
	\begin{align*}
	E(D_S)
	&= \sum_{i=1}^k \left[ 2\sum_{j\in A_i} \left( \Re(\lambda_{i,j})-\frac{\sigma}{n} \right) \right]\\
	&= 2\left[\sum_{i=1}^l  \sum_{j\in A_i} \left( \Re(\lambda_{i,j})-\frac{\sigma}{n} \right)  + \sum_{i=l+1}^k  \sum_{j\in A_i} \left( \Re(\lambda_{i,j})-\frac{\sigma}{n} \right) \right]\\
	&= 2 \sum_{i=1}^l  \left[  \left(\sum_{j\in B_i} + \sum_{j\in A_i-B_i}  \right) \left( \Re(\lambda_{i,j}) -\frac{\sigma_i}{n_i} +\frac{\sigma_i}{n_i}-\frac{\sigma}{n} \right) \right] \\
	&\quad + 2\sum_{i=l+1}^k \left[ \sum_{j\in A_i} \left( \Re(\lambda_{i,j})-\frac{\sigma}{n} \right) 
	+\sum_{j\in B_i-A_i} \left( \Re(\lambda_{i,j})-\frac{\sigma}{n} \right)-\sum_{j\in B_i-A_i} \left( \Re(\lambda_{i,j})-\frac{\sigma}{n} \right)\right] \\
	&= 2\sum_{i=1}^l \left[  \sum_{j\in B_i} \left( \Re(\lambda_{i,j}) -\frac{\sigma_i}{n_i}\right) + \sum_{j\in A_i-B_i} \left( \Re(\lambda_{i,j}) -\frac{\sigma_i}{n_i} \right)  + \sum_{j\in A_i} \left( \frac{\sigma_i}{n_i}-\frac{\sigma}{n} \right)   \right]\\
	&\quad+ 2\sum_{i=l+1}^k \left[ \sum_{j\in B_i} \left( \Re(\lambda_{i,j})-\frac{\sigma_i}{n_i} \right) + \sum_{j\in B_i} \left( \frac{\sigma_i}{n_i}-\frac{\sigma}{n} \right)-\sum_{j\in B_i-A_i} \left( \Re(\lambda_{i,j})-\frac{\sigma}{n} \right)  \right],
	\end{align*}

	where the fourth equality follows from
	\begin{align*}
	\sum_{j\in A_i} &\left( \Re(\lambda_{i,j})-\frac{\sigma}{n} \right) 
	+\sum_{j\in B_i-A_i} \left( \Re(\lambda_{i,j})-\frac{\sigma}{n} \right) \\
	&= \sum_{j\in B_i} \left( \Re(\lambda_{i,j})-\frac{\sigma}{n} \right) 
	=\sum_{j\in B_i} \left( \Re(\lambda_{i,j})-\frac{\sigma_i}{n_i} \right) + \sum_{j\in B_i} \left( \frac{\sigma_i}{n_i}-\frac{\sigma}{n} \right).
	\end{align*}
	Since $\sum_{i=1}^k E(D_i)= 2 \left[\sum_{i=1}^l  \sum_{j\in B_i} \left( \Re(\lambda_{i,j}) -\frac{\sigma_i}{n_i}\right) +  \sum_{i=l+1}^k  \sum_{j\in B_i} \left( \Re(\lambda_{i,j})-\frac{\sigma_i}{n_i} \right) \right],$ we have
	\begin{align*}
	E(D_S) 
	&= \sum_{i=1}^k E(D_i) + 2\sum_{i=1}^l  \sum_{j\in A_i-B_i} \left( \Re(\lambda_{i,j}) -\frac{\sigma_i}{n_i} \right) + 2\sum_{i=1}^l \left( \frac{\sigma_i}{n_i}-\frac{\sigma}{n} \right) |A_i| \\
	&\hspace{-2em} + 2\sum_{i=l+1}^k \left( \frac{\sigma_i}{n_i}-\frac{\sigma}{n} \right) |B_i| - 2\sum_{i=l+1}^k \sum_{j\in B_i-A_i} \left( \Re(\lambda_{i,j}) -\frac{\sigma_i}{n_i}\right) - 2\sum_{i=l+1}^k \sum_{j\in B_i-A_i} \left( \frac{\sigma_i}{n_i}-\frac{\sigma}{n}\right).
	\end{align*}	
	For $i=1,\ldots,l$ and $j\in A_i-B_i, \frac{\sigma}{n}< \Re(\lambda_{i,j}) \leq \frac{\sigma_i}{n_i}$. So $2\sum_{i=1}^l  \sum_{j\in A_i-B_i} \left( \Re(\lambda_{i,j}) -\frac{\sigma_i}{n_i} \right) \leq 0$. Similarly, $-2\sum_{i=l+1}^k \sum_{j\in B_i-A_i} \left( \Re(\lambda_{i,j}) -\frac{\sigma_i}{n_i}\right) \leq 0$. Thus,   
	\begin{align*}
	E(D_S)
	&\leq \sum_{i=1}^k E(D_i)+0 + 2\sum_{i=1}^l \left( \frac{\sigma_i}{n_i}-\frac{\sigma}{n} \right) |A_i|\\
	&\quad + 2\sum_{i=l+1}^k \left( \frac{\sigma_i}{n_i}-\frac{\sigma}{n} \right) |B_i|+0 - 2\sum_{i=l+1}^k \left( \frac{\sigma_i}{n_i}-\frac{\sigma}{n} \right) [|B_i|-|A_i|]\\
	&=\sum_{i=1}^k E(D_i) + 2\sum_{i=1}^l \left( \frac{\sigma_i}{n_i}-\frac{\sigma}{n} \right)|A_i|+ 2\sum_{i=l+1}^k \left( \frac{\sigma_i}{n_i}-\frac{\sigma}{n} \right) |A_i|.
	\end{align*}
	By our assumption, $\sum_{i=1}^l \left( \frac{\sigma_i}{n_i}-\frac{\sigma}{n} \right)|A_i|+ \sum_{i=l+1}^k \left( \frac{\sigma_i}{n_i}-\frac{\sigma}{n} \right) |A_i|\leq 0$. Therefore, $E(D_S) \leq \sum_{i=1}^k E(D_i)$.
\end{proof}

The next theorem provides a necessary condition for $E(D_S)\leq \sum_{i=1}^k E(D_i)$ to hold.

\begin{theorem}
    With the same assumption as in Theorem \ref{suffcond1}. The following implication  
    \begin{equation*}
        E(D_S)\leq \sum_{i=1}^k E(D_i) \implies \sum_{i=1}^l \left( \frac{\sigma_i}{n_i}-\frac{\sigma}{n} \right)|B_i| \leq \sum_{i=l+1}^k \left( \frac{\sigma}{n}-\frac{\sigma_i}{n_i} \right) |B_i|
    \end{equation*}
    holds. If the inequality on the left-hand side of the implication is strict, then the inequality on the right-hand side will also be strict.
\end{theorem}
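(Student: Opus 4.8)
The plan is to establish the \emph{reverse} of the bound proved in Theorem \ref{suffcond1}, namely the lower bound
\[
E(D_S) \geq \sum_{i=1}^k E(D_i) + 2\sum_{i=1}^k \left(\frac{\sigma_i}{n_i} - \frac{\sigma}{n}\right)|B_i|.
\]
The implication then follows immediately: if $E(D_S) \leq \sum_{i=1}^k E(D_i)$, the above forces $\sum_{i=1}^k \left(\frac{\sigma_i}{n_i} - \frac{\sigma}{n}\right)|B_i| \leq 0$, and splitting the index set into $i \leq l$ (where $\frac{\sigma_i}{n_i} - \frac{\sigma}{n} > 0$) and $i > l$ (where $\frac{\sigma_i}{n_i} - \frac{\sigma}{n} \leq 0$) rearranges this into the asserted conclusion $\sum_{i=1}^l \left(\frac{\sigma_i}{n_i} - \frac{\sigma}{n}\right)|B_i| \leq \sum_{i=l+1}^k \left(\frac{\sigma}{n} - \frac{\sigma_i}{n_i}\right)|B_i|$. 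The strictness claim is automatic from the same chain: if $E(D_S) < \sum_i E(D_i)$, then $\sum_{i=1}^k \left(\frac{\sigma_i}{n_i} - \frac{\sigma}{n}\right)|B_i| < 0$, giving the strict inequality on the right.

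To obtain the lower bound I would argue componentwise. Using \eqref{eq_energy_2_times} I write $E(D_S) = 2\sum_{i=1}^k \sum_{j \in A_i}\left(\Re(\lambda_{i,j}) - \frac{\sigma}{n}\right)$ and $E(D_i) = 2\sum_{j \in B_i}\left(\Re(\lambda_{i,j}) - \frac{\sigma_i}{n_i}\right)$, so it suffices to prove, for each fixed $i$,
\[
\sum_{j \in A_i}\left(\Re(\lambda_{i,j}) - \frac{\sigma}{n}\right) - \sum_{j \in B_i}\left(\Re(\lambda_{i,j}) - \frac{\sigma_i}{n_i}\right) \geq \left(\frac{\sigma_i}{n_i} - \frac{\sigma}{n}\right)|B_i|.
\]
For $i \leq l$ we have $B_i \subset A_i$; splitting $A_i = B_i \cup (A_i - B_i)$, the left-hand side equals $\left(\frac{\sigma_i}{n_i} - \frac{\sigma}{n}\right)|B_i| + \sum_{j \in A_i - B_i}\left(\Re(\lambda_{i,j}) - \frac{\sigma}{n}\right)$, and the residual sum is nonnegative since $\Re(\lambda_{i,j}) > \frac{\sigma}{n}$ on $A_i$. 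For $i > l$ we have $A_i \subset B_i$; splitting $B_i = A_i \cup (B_i - A_i)$, the left-hand side equals $\left(\frac{\sigma_i}{n_i} - \frac{\sigma}{n}\right)|A_i| - \sum_{j \in B_i - A_i}\left(\Re(\lambda_{i,j}) - \frac{\sigma_i}{n_i}\right)$, and here I would invoke the defining bound $\Re(\lambda_{i,j}) \leq \frac{\sigma}{n}$ on $B_i - A_i$, which yields $-\left(\Re(\lambda_{i,j}) - \frac{\sigma_i}{n_i}\right) \geq \frac{\sigma_i}{n_i} - \frac{\sigma}{n}$ for each such $j$; summing this converts the $\left(\frac{\sigma_i}{n_i} - \frac{\sigma}{n}\right)|A_i|$ term into the desired $\left(\frac{\sigma_i}{n_i} - \frac{\sigma}{n}\right)|B_i|$. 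Adding the componentwise inequalities over all $i$ gives the lower bound.

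In effect this argument is the mirror image of the proof of Theorem \ref{suffcond1}: there one \emph{discards} the residual sums over $A_i - B_i$ and $B_i - A_i$ to produce an upper bound phrased in the cardinalities $|A_i|$, whereas here one \emph{retains} them and estimates each by the appropriate threshold value $\frac{\sigma}{n}$ or $\frac{\sigma_i}{n_i}$ to produce a lower bound phrased in the cardinalities $|B_i|$. The only delicate point — and the main (though minor) obstacle — is keeping the direction of each estimate aligned with the sign of the coefficient $\frac{\sigma_i}{n_i} - \frac{\sigma}{n}$, which is positive for $i \leq l$ and nonpositive for $i > l$; once the componentwise inequality is oriented correctly, the remainder is bookkeeping already implicit in the identity displayed in the proof of Theorem \ref{suffcond1}.
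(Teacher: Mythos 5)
Your proposal is correct and is essentially the paper's own argument: the lower bound $E(D_S)-\sum_{i=1}^k E(D_i) \geq 2\sum_{i=1}^k\left(\frac{\sigma_i}{n_i}-\frac{\sigma}{n}\right)|B_i|$ that you establish componentwise is exactly what the paper extracts from its global expansion of $E(D_S)-\sum_{i=1}^k E(D_i)$, with the same sign analysis of the residual sums over $A_i-B_i$ (for $i\leq l$) and $B_i-A_i$ (for $i>l$), and the same passage to the conclusion and to strictness. The only difference is organizational --- you prove the key inequality one strong component at a time, while the paper carries it out as a single algebraic identity --- so nothing substantive distinguishes the two.
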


\begin{proof}
    For $i=1,\ldots,l$, we have $\frac{\sigma_i}{n_i}-\frac{\sigma}{n}> 0$  and $\frac{\sigma_i}{n_i}-\frac{\sigma}{n} \leq 0$ for $i=l+1,\ldots,k$. Note that	
	\begin{align*}
			&E(D_S)-\sum_{i=1}^k E(D_i)\\
			&= 2   \sum_{i=1}^l \sum_{j\in A_i} \left( \Re(\lambda_{i,j})-\frac{\sigma}{n} \right)  + 2\sum_{i=l+1}^k  \sum_{j\in A_i} \left( \Re(\lambda_{i,j})-\frac{\sigma}{n} \right) \\
			&\qquad - 2\sum_{i=1}^l \sum_{j\in B_i} \left( \Re(\lambda_{i,j})-\frac{\sigma_i}{n_i} \right)  - 2\sum_{i=l+1}^k \sum_{j\in B_i} \left( \Re(\lambda_{i,j})-\frac{\sigma_i}{n_i} \right)\\
			&= 2\sum_{i=1}^l \sum_{j\in A_i-B_i} \left( \Re(\lambda_{i,j})-\frac{\sigma}{n} \right) + 2\sum_{i=1}^l \sum_{j\in B_i} \left( \Re(\lambda_{i,j})-\frac{\sigma}{n} \right)  + 2\sum_{i=l+1}^k \sum_{j\in A_i} \left( \Re(\lambda_{i,j})-\frac{\sigma}{n} \right) \\
			& -2\sum_{i=1}^l \sum_{j\in B_i} \left( \Re(\lambda_{i,j})-\frac{\sigma_i}{n_i} \right)  - 2\sum_{i=l+1}^k \sum_{j\in B_i-A_i} \left( \Re(\lambda_{i,j})-\frac{\sigma_i}{n_i} \right)  - 2\sum_{i=l+1}^k \sum_{j\in A_i} \left( \Re(\lambda_{i,j})-\frac{\sigma_i}{n_i} \right)\\
			&= 2\sum_{i=1}^l \sum_{j\in A_i-B_i} \left( \Re(\lambda_{i,j})-\frac{\sigma}{n} \right)  + 2\sum_{i=1}^l \sum_{j\in B_i}\left(\frac{\sigma_i}{n_i}-\frac{\sigma}{n} \right)
			- 2\sum_{i=l+1}^k\sum_{j\in B_i-A_i} \left( \Re(\lambda_{i,j}) -\frac{\sigma_i}{n_i} \right) \\ 
			&\qquad+  2\sum_{i=l+1}^k \sum_{j\in A_i} \left(\frac{\sigma_i}{n_i}-\frac{\sigma}{n} \right)\\
			&= 2\sum_{i=1}^l \sum_{j\in A_i-B_i} \left( \Re(\lambda_{i,j})-\frac{\sigma}{n} \right)  + 2\sum_{i=1}^l \sum_{j\in B_i}\left(\frac{\sigma_i}{n_i}-\frac{\sigma}{n} \right) 
			- 2\sum_{i=l+1}^k \sum_{j\in B_i-A_i} \left( \Re(\lambda_{i,j})-\frac{\sigma}{n} \right) \\
			&\qquad + 2\sum_{i=l+1}^k \sum_{j\in B_i-A_i}\left(\frac{\sigma_i}{n_i}-\frac{\sigma}{n} \right)  + 2\sum_{i=l+1}^k \sum_{j\in A_i}\left(\frac{\sigma_i}{n_i}-\frac{\sigma}{n} \right).
	\end{align*}
	Since 
	\[
	E(D_S)-\sum_{i=1}^k E(D_i) \leq 0
	\]
	and 
	\[
	2\sum_{i=1}^l \sum_{j\in A_i-B_i} \left( \Re(\lambda_{i,j})-\frac{\sigma}{n} \right)  - 2\sum_{i=l+1}^k \sum_{j\in B_i-A_i} \left( \Re(\lambda_{i,j})-\frac{\sigma}{n} \right)  \geq 0,
	\] we have 
	\[
	2\sum_{i=1}^l \sum_{j\in B_i}\left(\frac{\sigma_i}{n_i}-\frac{\sigma}{n} \right)  + 2\sum_{i=l+1}^k \sum_{j\in B_i-A_i}\left(\frac{\sigma_i}{n_i}-\frac{\sigma}{n} \right)  + 2\sum_{i=l+1}^k \sum_{j\in A_i}\left(\frac{\sigma_i}{n_i}-\frac{\sigma}{n} \right) \leq 0.
	\] 
	Therefore $\sum_{i=1}^l \left( \frac{\sigma_i}{n_i}-\frac{\sigma}{n} \right)|B_i| \leq \sum_{i=l+1}^k \left( \frac{\sigma}{n}-\frac{\sigma_i}{n_i} \right) |B_i|$.
\end{proof}

\begin{remark}
	Observe that 
	\begin{align*}
		&\sum_{i=1}^l \left( \frac{\sigma_i}{n_i}-\frac{\sigma}{n}\right) |A_i| - \sum_{i=l+1}^k \left( \frac{\sigma}{n}-\frac{\sigma_i}{n_i} \right)|A_i|\\
		&=\sum_{i=1}^l \left( \frac{\sigma_i}{n_i}-\frac{\sigma}{n} \right)|B_i| - \sum_{i=l+1}^k \left( \frac{\sigma}{n}-\frac{\sigma_i}{n_i} \right) |B_i| + \sum_{i=1}^l \left( \frac{\sigma_i}{n_i}-\frac{\sigma}{n} \right)\left(|A_i|-|B_i| \right) \\
		& \quad+ \sum_{i=l+1}^k \left( \frac{\sigma}{n}-\frac{\sigma_i}{n_i} \right) \left(|B_i|-|A_i|\right).
	\end{align*}
	Since the last two terms are greater or equals to zero, we have 
	\[
	\sum_{i=1}^l \left( \frac{\sigma_i}{n_i}-\frac{\sigma}{n}\right) |A_i| - \sum_{i=l+1}^k \left( \frac{\sigma}{n}-\frac{\sigma_i}{n_i} \right)|A_i|
	\geq \sum_{i=1}^l \left( \frac{\sigma_i}{n_i}-\frac{\sigma}{n} \right)|B_i| - \sum_{i=l+1}^k \left( \frac{\sigma}{n}-\frac{\sigma_i}{n_i} \right) |B_i|.
	\]
\end{remark}

The following lemma generalizes \cite[Proposition 2.1]{rada2009mcclelland}.
 \begin{lemma}
	\label{E(D_S)=0_acyclic}
        Let $D_S$ be a digraph with $\sigma$ self-loops.
	Then $E(D_S)=0$ if and only if $D_S$ is an acyclic digraph with either $\sigma=0$ or $\sigma=n$.
\end{lemma}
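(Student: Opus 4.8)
The plan is to prove both implications by translating the identity $E(D_S)=0$ into pointwise conditions on the real and imaginary parts of the eigenvalues, and then reading off the combinatorial structure of $D_S$ via the trace identities of Lemma~\ref{sum_eigenvalue} together with the closed-walk interpretation of $\tr(A(D_S)^k)$.

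For the easy direction, suppose $D_S$ is acyclic with $\sigma=0$ or $\sigma=n$. When $\sigma=0$ the digraph has no self-loops and no cycles, so a topological ordering of the vertices makes $A(D_S)$ strictly upper triangular; hence $A(D_S)$ is nilpotent and every eigenvalue equals $0$, giving $E(D_S)=\sum_i|\Re(\lambda_i)-0|=0$. When $\sigma=n$ we have $A(D_S)=A(D)+I_n$ for the underlying simple digraph $D$, which is again acyclic; as before $A(D)$ is nilpotent, so every eigenvalue of $A(D_S)$ equals $1=\sigma/n$ and once more $E(D_S)=0$. Alternatively, one may invoke Lemma~\ref{lemma_E(D_S)=E(D)} to reduce both cases to the statement that an acyclic simple digraph has energy $0$.

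For the converse, assume $E(D_S)=0$. Since every summand in Definition~\ref{E(D_S)} is nonnegative, we must have $\Re(\lambda_i)=\sigma/n$ for all $i$, so $\sum_{i=1}^n \Re(\lambda_i)^2=\sigma^2/n$. Substituting this into the second-moment identity \eqref{eq:ReImineq1} yields
\[
\sum_{i=1}^n \Im(\lambda_i)^2=\frac{\sigma^2}{n}-\sigma-c_2=-\frac{\sigma(n-\sigma)}{n}-c_2.
\]
The left-hand side is nonnegative, while the right-hand side is nonpositive because $0\le\sigma\le n$ forces $\sigma(n-\sigma)\ge 0$ and $c_2\ge 0$. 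Hence both sides vanish, which simultaneously forces $\Im(\lambda_i)=0$ for every $i$, $c_2=0$, and $\sigma(n-\sigma)=0$; the last equality gives $\sigma=0$ or $\sigma=n$. In either case every eigenvalue is real and equal to $\sigma/n$.

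It remains to deduce acyclicity, which I expect to be the main (though short) obstacle, since the spectral condition ``all eigenvalues equal $\sigma/n$'' must be converted into a genuinely combinatorial statement. The key point is that $w_k:=\tr(A(D_S)^k)=\sum_i\lambda_i^k$ counts closed walks of length $k$ and is therefore a nonnegative integer admitting no cancellation. If $\sigma=0$, then every $\lambda_i=0$, so $w_k=0$ for all $k\ge 1$; a cycle of length $\ell\ge 2$ would produce a closed walk and force $w_\ell\ge 1$, a contradiction, so $D_S$ is acyclic. If $\sigma=n$, apply the same argument to $A(D)=A(D_S)-I_n$, all of whose eigenvalues are $0$: its closed-walk counts vanish, so the underlying simple digraph $D$ has no cycle, and since the cycles of length at least $2$ in $D_S$ correspond exactly to the cycles of $D$, the digraph $D_S$ is acyclic. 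This settles both directions.
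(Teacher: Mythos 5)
Your proof is correct, but the key step of the converse takes a genuinely different route from the paper. The paper first pins down the eigenvalues exactly: from $\Re(\lambda_i)=\sigma/n$ and the Perron--Frobenius bound $|\Re(\lambda_i)|\le|\lambda_i|\le\rho$ it concludes $\lambda_i=\sigma/n$ for every $i$, and then invokes the rational-root lemma (Lemma~\ref{rationalrootsofmonic}) applied to the characteristic polynomial to force $\sigma/n\in\{0,1\}$; acyclicity is then obtained by citing Rada for $\sigma=0$ and by counting closed walks for $\sigma=n$. You instead feed $\sum_i\Re(\lambda_i)^2=\sigma^2/n$ into the second-moment identity \eqref{eq:ReImineq1} and exploit the sign clash
\[
0\le\sum_{i=1}^n\Im(\lambda_i)^2=-\frac{\sigma(n-\sigma)}{n}-c_2\le 0,
\]
which simultaneously yields $\Im(\lambda_i)=0$, $c_2=0$, and $\sigma\in\{0,n\}$ in one stroke. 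This is more self-contained: it needs neither Perron--Frobenius nor the number-theoretic lemma, and it hands you $c_2=0$ for free. Your finish is also slightly more uniform than the paper's, treating $\sigma=0$ and $\sigma=n$ by the same nilpotency/walk-counting argument (shifting by $I_n$ in the second case) rather than outsourcing the $\sigma=0$ case to Rada's result. Both arguments are sound; the paper's has the minor virtue of identifying the full spectrum $\{\sigma/n\}$ before knowing $\sigma$, while yours is the more elementary and arguably cleaner derivation of the dichotomy $\sigma\in\{0,n\}$.
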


\begin{proof}
	The sufficient condition can be shown by direct calculation. We prove the necessary condition.
	Suppose $E(D_S)=0$. Then $\Re(\lambda_i)=\frac{\sigma}{n}$ for all $i=1,\ldots,n$. From $|\Re(\lambda_i)| \leq |\lambda_i| \leq \rho$, this implies $\lambda_i=\frac{\sigma}{n}$ for all $i=1,\ldots,n$. By Lemma \ref{rationalrootsofmonic}, $\frac{\sigma}{n}$ must be an integer. So, $\sigma=0$ or $\sigma=n$. For $\sigma=0$, Rada \cite[Proposition 2.1]{rada2009mcclelland} showed that $D_S=D$ is acyclic. We focus on the case $\sigma=n$. Then $\lambda_i=1$ for all $i$. From $\sum_{i=1}^n \lambda^k_i=w_k$, we have $w_k=n$ for $k \geq 1$. If we exclude the closed walks that consist of just one vertex (in this case, there are $\sigma=n$ of them), then we get that the number of all other possible closed walks is zero. In particular, there is no cycle with length at least two. So $D_S$ is an acyclic digraph with full loops.
\end{proof}

\section{McClelland type bound for the energy of digraphs with self-loops}
\label{sec4}

In this section, we present the McClelland type bound for the energy of digraphs with self-loops, which generalizes \cite[Theorem 2.3]{rada2009mcclelland}: if $\sigma=0,$ then $E(D)\leq \sqrt{ \frac{1}{2}n \left ( m+c_2\right) }.$

\begin{theorem}
	\label{E(D_S)_leq_sqrt}
	Suppose $D_S$ is a digraph of order $n$, size $m$, $c_2$ 2-cycles, and $\sigma$ self-loops. Then,
	\begin{equation}
		\label{energyinsqrt}
		E(D_S)\leq \sqrt{ \frac{1}{2}n \left ( m+c_2+ 2\sigma-\frac{2\sigma^2}{n} \right) }.
	\end{equation}
	The equality holds if and only if $D_S$ is 
        \begin{enumerate}[(i)]
            \item $ n\overleftrightarrow{K_1} \text{ or } \frac{n}{2} \overleftrightarrow{K_2}.$
            \item $\frac{n}{2}\overleftrightarrow{K_1} \bigcup \frac{n}{2}\widetilde{K_1} \text{ or } \frac{n}{2}\overleftrightarrow{K_2^+}$,
            where $\overleftrightarrow{K_2^+}$ is symmetrization of the complete graph $K_2$ with one self-loop. 
            \item$\frac{n}{2}\widetilde{K_2}$.
        \end{enumerate}
\end{theorem}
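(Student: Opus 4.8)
The plan is to read \eqref{energyinsqrt} as a Cauchy--Schwarz estimate on the real parts of the spectrum, and then to separate the two sources of equality. First I would apply the Cauchy--Schwarz inequality to the $n$ numbers $\bigl|\Re(\lambda_i)-\frac{\sigma}{n}\bigr|$ against the all-ones vector, giving
\begin{equation*}
E(D_S)^2=\left(\sum_{i=1}^n\left|\Re(\lambda_i)-\frac{\sigma}{n}\right|\right)^2\le n\sum_{i=1}^n\left(\Re(\lambda_i)-\frac{\sigma}{n}\right)^2 .
\end{equation*}
Expanding the right-hand side and using $\sum_{i=1}^n\Re(\lambda_i)=\sigma$ (the identity extracted from Lemma \ref{sum_eigenvalue}(a) in the proof of Proposition \ref{energy+}), the cross term cancels and the square term contributes $\frac{\sigma^2}{n}$, so that $\sum_{i=1}^n\left(\Re(\lambda_i)-\frac{\sigma}{n}\right)^2=\sum_{i=1}^n\Re(\lambda_i)^2-\frac{\sigma^2}{n}$. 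Feeding in the first bound of Corollary \ref{ReImineq}, namely $\sum_{i=1}^n\Re(\lambda_i)^2\le\frac{m+c_2+2\sigma}{2}$, yields
\begin{equation*}
E(D_S)^2\le n\left(\frac{m+c_2+2\sigma}{2}-\frac{\sigma^2}{n}\right)=\frac12 n\left(m+c_2+2\sigma-\frac{2\sigma^2}{n}\right),
\end{equation*}
which is exactly \eqref{energyinsqrt}.

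For the characterization of equality I would track when each inequality above is tight. Equality in Cauchy--Schwarz forces $\bigl|\Re(\lambda_i)-\frac{\sigma}{n}\bigr|$ to be independent of $i$, say equal to $c\ge 0$; since $\sum_{i=1}^n\left(\Re(\lambda_i)-\frac{\sigma}{n}\right)=0$, in the case $c>0$ this means $n$ is even and exactly $n/2$ eigenvalues have real part $\frac{\sigma}{n}+c$ while the other $n/2$ have real part $\frac{\sigma}{n}-c$. Equality in Corollary \ref{ReImineq} is equality in \eqref{eq:ReImineq2}; revisiting the Schur argument in Proposition \ref{sum_of_re&im}, this holds precisely when the strictly upper-triangular part of $T$ vanishes, i.e. when $A(D_S)$ is a \emph{normal} matrix. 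Hence the digraphs attaining \eqref{energyinsqrt} are exactly the normal $(0,1)$-digraphs whose eigenvalues have real parts confined to the two symmetric values $\frac{\sigma}{n}\pm c$.

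It remains to classify these normal digraphs, which is the main obstacle. I would first dispose of the degenerate branch $c=0$: then $E(D_S)=0$, so Lemma \ref{E(D_S)=0_acyclic} makes $D_S$ acyclic with $\sigma\in\{0,n\}$; writing $A(D_S)=N$ when $\sigma=0$ or $A(D_S)=I_n+N$ when $\sigma=n$, with $N$ the nilpotent adjacency matrix of an acyclic digraph, normality of $A(D_S)$ forces $N$ normal, and a normal nilpotent matrix is zero, so $A(D_S)=0$ or $A(D_S)=I_n$. For the principal branch $c>0$ I would exploit normality structurally: ordering the vertices by the condensation of $D_S$ makes $A(D_S)$ block upper-triangular, and a normal block-triangular matrix is block-diagonal (comparing the $(1,1)$ block of $A(D_S)A(D_S)^{T}=A(D_S)^{T}A(D_S)$ and taking traces forces the off-diagonal blocks to vanish). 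Thus there are no arcs between distinct strong components and $D_S=\bigsqcup_i D_i$, each $D_i$ strongly connected with normal adjacency matrix and with spectrum a subset of the global spectrum (by \cite[Theorem 2.4]{cvetkovic1995spectra}). By the Perron--Frobenius Theorem \ref{Perron}, each component of order at least $2$ is irreducible and so contributes a simple real spectral radius, which must equal the larger value $\frac{\sigma}{n}+c$; analysing which strongly connected normal $(0,1)$-digraphs keep \emph{all} their eigenvalue real parts inside $\bigl\{\frac{\sigma}{n}\pm c\bigr\}$, and then enforcing the global $n/2$--$n/2$ balance between the two real parts, should single out the symmetrizations of $K_2$ carrying $0$, $1$, or $2$ loops as the only admissible blocks of order $\ge 2$ and pair up the single-vertex blocks $\overleftrightarrow{K_1}$ and $\widetilde{K_1}$, assembling the families (i)--(iii). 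The delicate point — and the step I expect to require the most careful case analysis — is precisely this classification of admissible strong components together with the counting that matches the two eigenvalue levels.
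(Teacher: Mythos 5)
Your derivation of the inequality is exactly the paper's: Cauchy--Schwarz against the all-ones vector, the identity $\sum_i\Re(\lambda_i)=\sigma$, and the first bound of Corollary \ref{ReImineq}. Your reduction of the equality case is also partly sound, and in one respect cleaner than the paper's: you correctly observe that tightness of \eqref{eq:ReImineq2} is equivalent to normality of $A(D_S)$ (diagonal Schur form), and normality yields the splitting of $D_S$ into its strong components for free (a normal block-triangular matrix is block-diagonal), whereas the paper reaches the same splitting through Lemma \ref{charpoly_arc_deletion} and a counting argument forcing $m=m'$. Your handling of the branch $c=0$ (normal nilpotent implies zero) is likewise a valid alternative to the paper's argument that equality in \eqref{energyinsqrt} forces $m=0$.

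However, there is a genuine gap at the decisive step: the classification of the admissible strong components when $c>0$ is announced but never carried out. You write that the analysis ``should single out'' the order-two symmetrizations carrying $0$, $1$ or $2$ loops, and you yourself flag this as the step requiring the most careful case analysis --- but that case analysis \emph{is} the content of the equality characterization. Concretely, two things are missing. First, you must exclude strong components with non-real eigenvalues: a strongly connected normal $(0,1)$-digraph can certainly have complex spectrum (e.g.\ $\overrightarrow{C_k}$, whose adjacency matrix is a permutation matrix), so you need an argument that the constraint $\Re(\lambda)\in\left\{\frac{\sigma}{n}\pm c\right\}$ together with the $\frac{n}{2}$--$\frac{n}{2}$ balance rules them out; this is the paper's Subcase 2.2, which takes a nontrivial amount of work (ruling out isolated vertices, then showing every component would need an eigenvalue at the upper level, forcing all eigenvalues real). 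Second, in the all-real case you must classify strongly connected digraphs with self-loops having exactly two distinct eigenvalues; this is not elementary, and the paper imports it from \cite[Corollary 3.5]{cavers2025distinct} (yielding $\overleftrightarrow{K_{n_i}}$, $\overleftrightarrow{K_2^+}$, an intermediate loop case, and $\widetilde{K_{n_i}}$, after which the two-level constraint forces $n_i=2$ and $k=\frac{n}{2}$). As it stands, your argument establishes only that equality implies ``normal, disjoint union of strong components, real parts at two symmetric levels,'' which is strictly weaker than the stated conclusion (i)--(iii).
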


\begin{proof}
    By the Cauchy-Schwarz inequality 
    \begin{align*}
         \sum_{i=1}^n a_ib_i \leq \sqrt{\sum_{i=1}^n a_i^2} \sqrt{\sum_{i=1}^n b_i^2},
    \end{align*}
    with $a_i=1$ and $b_i= \left | \Re(\lambda_i)-\frac{\sigma}{n} \right|,$ we obtain
	\begin{align*}    
		E(D_S)
		&= \sum_{i=1}^n \left | \Re(\lambda_i)-\frac{\sigma}{n} \right| \\          
		&\leq \sqrt{n} \sqrt{\sum_{i=1}^n \left | \Re(\lambda_i)-\frac{\sigma}{n} \right|^2} \\      
		&= \sqrt{n} \sqrt{\sum_{i=1}^n \Re(\lambda_i)^2+\frac{\sigma^2}{n}-\frac{2\sigma}{n} \sum_{i=1}^n \Re(\lambda_i)} \\
		&\leq \sqrt{n} \sqrt{ \frac{1}{2}(m+c_2+2\sigma)-\frac{\sigma^2}{n} } \\
		&= \sqrt{ \frac{1}{2}n \left ( m+c_2+ 2\sigma-\frac{2\sigma^2}{n} \right) }.         
	\end{align*}    
    The sufficient condition can be checked by direct computation. We focus on the necessary condition. 

    Firstly, we show that $D_S$ is the direct sum of its strong components. Let $D_S'$ be the sub-digraph of $D_S$ obtained by deleting all the arcs that do not belong to any cycle. By \ref{charpoly_arc_deletion}, we have $\phi(D_S')=\phi(D_S)$. Let $m'$ be the number of arcs of $D_S'$. Then
    \begin{equation*}
        \sqrt{ \frac{1}{2}n \left ( m+c_2+ 2\sigma-\frac{2\sigma^2}{n} \right)}=E(D_S)=E(D_S') \leq \sqrt{ \frac{1}{2}n \left ( m'+c_2+ 2\sigma-\frac{2\sigma^2}{n} \right)}.
    \end{equation*} 
    Solving this gives us $m \leq m'$. Thus $m=m'$. Hence, $D_S$ does not contain any arc that does not belong to a cycle. Now consider any two strong components of $D_S$, say $D_1$ and $D_2$. For any two vertices $v\in D_1$ and $w\in D_2$, if there is an arc joining $v$ and $w$, say $(v,w)$ and $(v,w)$ belongs to a cycle, say $(v,w,v_1, \dots, v_{l},v)$. Then this implies that $v$ is reachable from $w$, contradicting the maximality of $D_1$. Thus, such an arc cannot possibly exist. 
    Therefore, $D_S$ is the disjoint union of its strong components.
   
    Now suppose the equality holds. For all $i=1,...,n$, it holds that $\left | \Re(\lambda_i)-\frac{\sigma}{n} \right|=r$ for some $r\in \mathbb{R}$. So, $\Re(\lambda_i)$ equals to either $\frac{\sigma}{n}+r$ or $\frac{\sigma}{n}-r$. Let $l$ be the number of $\lambda_i$ such that $\Re(\lambda_i)=\frac{\sigma}{n}+r$. From $\sum_{i=1}^n \Re(\lambda_i)=\sigma$, we have 
    \[
    l\left( \frac{\sigma}{n}+r \right) + (n-l) \left( \frac{\sigma}{n}-r \right) = \sigma.
    \]
    Solving this equation gives $2rl = rn.$
	
\begin{enumerate}
	\item Case 1: $r=0$. Then $E(D_S)=0$. By Lemma \ref{E(D_S)=0_acyclic}, $D_S$ is an acyclic digraph with no loop or full loops. In both of the cases,  we have $m=0$ from equality \eqref{energyinsqrt},. This implies that $D_S=n\overleftrightarrow{K_1}$ or $n\widetilde{K_1}$.

	\item Case 2: $r\neq 0$. Then, half of the eigenvalues have real part equal to $\frac{\sigma}{n}+r$ and the other half have real part equal to $\frac{\sigma}{n}-r$. Without loss of generality, we assume $\rho = \frac{\sigma}{n}+r$.
		\begin{enumerate}
			\item Subcase 2.1: All eigenvalues are real. Thus the spectrum of $D_S$ is 
			\begin{center}
				$\text{Spec($D_S$)}= \left\{  \left[\frac{\sigma}{n}+r \right] ^{\frac{n}{2}}, \left[ \frac{\sigma}{n}-r \right]^{\frac{n}{2}} \right\}.$
				
			\end{center}
            
			Subcase 2.1.1: $D_S$ contains $\overleftrightarrow{K_1}$ or $\widetilde{K_1}$. This implies that $\frac{\sigma}{n}+r = 0 \text{ or } 1$. The case $\frac{\sigma}{n}+r = 0$ is impossible since $r>0$.
			
			For $\frac{\sigma}{n}+r=1$, the spectrum of $D_S$ becomes 
			\begin{center}
				$\text{Spec($D_S$)}= \left\{  \left[1 \right] ^{\frac{n}{2}}, \left[ \frac{2\sigma}{n}-1 \right]^{\frac{n}{2}} \right\}.$
			\end{center}
			By Lemma \ref{rationalrootsofmonic}, $\frac{2\sigma}{n}-1$ must be an integer. This only happens when $\sigma = 0, \frac{n}{2} \text{ or } n$.
			\begin{enumerate}[i.]
				\item  If $\sigma=0$. From the spectrum of $D_S$, 
                \begin{center}
				$\text{Spec($D_S$)}= \left\{  \left[1 \right] ^{\frac{n}{2}}, \left[ -1 \right]^{\frac{n}{2}} \right\},$
			\end{center}
               we deduce that it is impossible to contain any isolated vertex $\overleftrightarrow{K_1}$.
                
				\item If $\sigma=\frac{n}{2}$, then $m=c_2=0$ from Equality \eqref{energyinsqrt}. So $D_S=\frac{n}{2} \overleftrightarrow{K_1} \bigcup \frac{n}{2} \widetilde{K_1}$.
                
				\item If $\sigma =n$, then $m=c_2=0$ from Equality \eqref{energyinsqrt}. So, $D_S=n\widetilde{K_1}$.
			\end{enumerate}
			Therefore, $D_S=\frac{n}{2} \overleftrightarrow{K_1} \bigcup \frac{n}{2} \widetilde{K_1} \text{ or }n\widetilde{K_1}$.\\
			
			Subcase 2.1.2: $D_S$ does not contain $\overleftrightarrow{K_1}$ or $\widetilde{K_1}$. Suppose $D_1,\ldots, D_k$ are the strong components of $D_S$, each with order $n_i$ and $\sigma_i$ self-loops. Since $D_S$ has only two distinct eigenvalues, all the strong components of $D_S$ must be identical. Thus $\sigma_1=\cdots=\sigma_k$ and $n_1=\cdots=n_k$. As each strong component $D_i$ has two distinct eigenvalues, by \cite[Corollary 3.5]{cavers2025distinct}, we have the following cases:
            \begin{enumerate}[(i)]
                \item $\sigma_i=0$ and $D_i=\overleftrightarrow{K_{n_i}}$ with spectrum $\{[n_i-1]^1,[-1]^{n_i-1} \}$. We have $\sigma=0$.
                Solving 
                \begin{align*}
                    \frac{\sigma}{n}+r &= n_i-1, \\
                    \frac{\sigma}{n}-r &= -1,
                \end{align*}
            yields $n_i=2$. From $\sum_{i=1}^k n_i=n$, it follows that $k=\frac{n}{2}$. Therefore, $D_S=\frac{n}{2} \overleftrightarrow{K_2}$.

                \item $\sigma_i=1$ and $D_i=\overleftrightarrow{K_2^+}$ with spectrum $\{ [\frac{1}{2}(1+\sqrt{5})]^1, [\frac{1}{2}(1-\sqrt{5})]^1 \}$. We have $\sigma=\sum_{i=1}^k \sigma_i = k$. 
                Solving 
                \begin{align*}
                    \frac{k}{n}+r &=\frac{1}{2}(1+\sqrt{5}), \\
                    \frac{k}{n}-r &=\frac{1}{2}(1-\sqrt{5}),
                \end{align*}
                gives $k=\frac{n}{2}$. Therefore, $D_S=\frac{n}{2}\overleftrightarrow{K_2^+}$.

                \item $2\leq\sigma_i\leq n_i-1$ and $\text{Spec}(D_i)=\{[\sigma_i]^1, [0]^{n_i-1}\}$. Using the same argument, it can be shown that $n_i=2$, which is a contradiction. 

                \item $\sigma_i=n_i$ and $D_i=\widetilde{K_{n_i}}$ with spectrum $\{[n_i]^1, [0]^{n_i-1} \}$. By the same argument, we obtain $n_i=2$ and $k=\frac{n}{2}$. Hence, $D_S=\frac{n}{2}\widetilde{K_2}$.
            \end{enumerate}
        Therefore, $D_S=\frac{n}{2} \overleftrightarrow{K_2}, \frac{n}{2}\overleftrightarrow{K_2^+} \text{ or } \frac{n}{2}\widetilde{K_2}$.\\

	\item Subcase 2.2: The eigenvalues with $\Re(\lambda_i)=\frac{\sigma}{n}-r$ and $\Im(\lambda_i) \neq 0$ for some $i$. We show that this case is impossible. Observe that in this case, the spectrum of $D_S$ becomes
	\begin{center}
				$\text{Spec($D_S$)}= \left\{  \left[\dfrac{\sigma}{n}+r \right] ^{\frac{n}{2}}, \underbrace{\frac{\sigma}{n}-r + i\Im(\lambda_k) }_{\frac{n}{2} \text{ times} }\right\}.$
				
			\end{center}
	
        By a simple computation, we get
        \begin{align*}
            \sum_{k=1}^{\frac{n}{2}} \Im(\lambda_k)^2=\frac{m-c_2}{2}.
        \end{align*}
        By our assumption, we have $m>c_2$. First, we show that $D_S$ could not possibly contain an isolated vertex (Either with a loop or without a loop). This corresponds to $r=-\frac{\sigma}{n}, 1-\frac{\sigma}{n}, \frac{\sigma}{n}, \text{ or } \frac{\sigma}{n}-1$. The cases of $r=-\frac{\sigma}{n}$ and $\frac{\sigma}{n}-1$ are impossible as $r>0$. Thus, we are left with only two cases, i.e. $r=\frac{\sigma}{n}$ or $1-\frac{\sigma}{n}$.
        
        \begin{enumerate}[(i)]
            \item Case 1: $r=\frac{\sigma}{n}$. 
            
            Then $\frac{2\sigma}{n}$ is a root of $\phi(\lambda)$. So it must be an integer. So $\sigma=0, \frac{n}{2}$ or $n$.
            From 
            \begin{equation*}
                r=\sqrt{\frac{1}{2n} \left( m+c_2+2\sigma-\frac{2\sigma^2}{n} \right)},
            \end{equation*}
             we have $m+c_2=0$ for all cases, which contradicts our assumption $m>c_2$. This means that $D_S$ cannot possibly contain an isolated vertex $\overleftrightarrow{K_1}$. 
            
            \item Case 2: $r=1-\frac{\sigma}{n}$. 
            
            If there exists at least one purely real eigenvalue $\frac{\sigma}{n}-r$, then $\sigma=0,\frac{n}{2}$ or $n$, but this again gives $m+c_2=0$. So all eigenvalues with $\Re(\lambda_k)=\frac{\sigma}{n}-r$ must be complex. Each complex number must be in some spectrums that contains at least one $\frac{\sigma}{n}+r$ (Since each strong component must have at least one nonnegative real eigenvalue and the only possible candidate is $\frac{\sigma}{n}+r$). Therefore, $D_S$ cannot possibly contains $\widetilde{K_1}$ too.
        \end{enumerate}
     
     This implies that the spectrum of each strong component of $D_S$ must necessarily contains at least two eigenvalues. Moreover, $D_S$ has at least $\frac{n}{2}$ strong components. This means that every eigenvalue equals to $\frac{\sigma}{n}+r$ must be in a spectrum that contains at least one eigenvalue with real part equals to $\frac{\sigma}{n}-r$. This is only possible if and only if all eigenvalues are real, contradicting our assumption that there exists some complex eigenvalues. Thus, all eigenvalues must be real.
    \end{enumerate}
\end{enumerate}
\end{proof}

\section{Some Bounds For Spectral Radius Of Digraphs With Self-Loops And Its Complement}
\label{sec5}
    In this section, we define the complement of a digraph with self-loops $D_S$ and give a few bounds for the spectral radius of $D_S$ and its complement. We first recall some definitions that will be frequently used throughout this section.

    \begin{definition} \cite{kolotilina1993perronroot}
        Let $A$ be an $n\times n$ real matrix such that $a_{ij}a_{ji}\geq0$ for all $1\leq i,j \leq n$. The geometric symmetrization of $A$ is a matrix $S(A)=(s_{ij})$ such that $s_{ij}=\sqrt{a_{ij}a_{ji}}.$ Clearly, $S(A)$ is symmetric and if $A$ is symmetric, then $A=S(A)$.
    \end{definition}

    \begin{definition}
    \cite{kolotilina1993perronroot}    
    Let $A$ and $B$ be $n\times n$ real matrices. Then, $A$ is said to be diagonally similar to $B$, denoted as $A \overset{D}{\sim} B$, if there exists a real diagonal matrix $D=Diag(d_1, \ldots, d_n)$ such that $A=D^{-1}BD$.
    \end{definition}

    For an $(0,1)$-matrix $A$, we have the following result:

    \begin{lemma}
        \label{A_diagonal similar_SA}
        Suppose $A$ is an $(0,1)$-matrix and $A$ is diagonally similar to $S(A)$. Then, $A$ is symmetric. 
    \end{lemma}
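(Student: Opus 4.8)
The plan is to unwind the definition of diagonal similarity entrywise. Writing $A = D^{-1} S(A) D$ with $D = \diag(d_1, \ldots, d_n)$ and each $d_i \neq 0$ (which is forced, since $D$ must be invertible for $D^{-1}$ to exist), the $(i,j)$ entry of the right-hand side is $\tfrac{d_j}{d_i} s_{ij}$. Hence I would record the identity
\[
a_{ij} = \frac{d_j}{d_i}\, s_{ij} = \frac{d_j}{d_i}\sqrt{a_{ij}\,a_{ji}}
\]
valid for all $i,j$, as the single relation driving the whole argument.

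The key step is to exploit the $(0,1)$-structure. I would fix a pair $(i,j)$ with $a_{ij}=1$ and show $a_{ji}=1$ by contradiction. If instead $a_{ji}=0$, then $s_{ij}=\sqrt{a_{ij}a_{ji}}=0$, and the displayed identity collapses to $1 = a_{ij} = \tfrac{d_j}{d_i}\cdot 0 = 0$, which is absurd. Therefore $a_{ji}\neq 0$, and since every entry of $A$ lies in $\{0,1\}$, this means $a_{ji}=1$.

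Swapping the roles of $i$ and $j$ yields the reverse implication $a_{ji}=1 \implies a_{ij}=1$. Combining the two gives $a_{ij}=1$ if and only if $a_{ji}=1$, and as all entries are in $\{0,1\}$, this is precisely $a_{ij}=a_{ji}$ for every $i,j$; that is, $A$ is symmetric.

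There is no genuine obstacle here. The only point requiring care is that diagonal similarity presupposes $D$ invertible, so each $d_i$ is nonzero and the ratios $d_j/d_i$ are well defined and nonzero. Crucially, the argument never uses the actual values of the $d_i$: it relies only on the scaling factor $d_j/d_i$ being nonzero, which is exactly what makes a vanishing entry $s_{ij}=0$ incompatible with $a_{ij}=1$.
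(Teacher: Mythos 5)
Your proof is correct and follows essentially the same route as the paper's: both unwind the diagonal similarity entrywise to get $a_{ij}$ proportional to $\sqrt{a_{ij}a_{ji}}$ with a nonzero factor $d_j/d_i$, then derive a contradiction from $a_{ij}=1$, $a_{ji}=0$ using the $(0,1)$ structure. The only cosmetic difference is that the paper writes the relation as $\frac{d_i}{d_j}a_{ij}=\sqrt{a_{ij}a_{ji}}$ and concludes $\frac{d_i}{d_j}=0$ is impossible, while you isolate $a_{ij}$ and conclude $1=0$ is absurd.
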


    \begin{proof}
        Since $A \overset{D}{\sim} S(A)$, we have $DAD^{-1}=S(A)$. This implies $\frac{d_i}{d_j}a_{ij}=\sqrt{a_{ij}a_{ji}}$. Suppose $a_{ij}\neq a_{ji}$. Without loss of generality, assume $a_{ij}=1$ and $a_{ji}=0.$ Then, $\frac{d_i}{d_j}=0$, which is impossible as $d_i,d_j\neq 0$. Thus, $a_{ij}=a_{ji}$ and $A$ is symmetric.
    \end{proof}

    The following inequality generalizes the lower bound given in \cite{gudino2010lowerboundofspectral}.
    \begin{lemma}
        \label{lemmalowerbound_p}
        Let $A$ be the adjacency matrix of a digraph $D_S$ of order $n$, $c_2$ $2$-cycles, and $\sigma$ self-loops. Let $\rho(A)$ be the spectral radius of $A$. Then, 
        \begin{equation}
            \label{eqlowerbound_p}
            \rho(A) \geq \frac{c_2+\sigma}{n}.
        \end{equation}        
        The equality holds if and only if $D_S=\overleftrightarrow{G_S}+ \text{\{Some arcs that do not belong to cycles\}}$, where $G_S$ is a $(\frac{c_2+\sigma}{n},\frac{c_2+\sigma}{n}+1) $-bidegreed graph with self-loops such that
        \begin{align*}
            deg_{G_S}(v)= 
            \begin{cases}
	\frac{c_2+\sigma}{n}+1, &\text{if $v\in S$} ,\\
	\frac{c_2+\sigma}{n}, &\text{if $v \in \cV-S$. }
            \end{cases}
        \end{align*}
    \end{lemma}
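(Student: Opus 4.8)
The plan is to prove the lower bound $\rho(A) \geq \frac{c_2+\sigma}{n}$ via a trace/averaging argument, and then to extract the equality characterization by tracking when the inequality becomes tight. First I would compute $\tr(A^2) = \sum_{i=1}^n \lambda_i^2 = c_2 + \sigma$ by Lemma~\ref{sum_eigenvalue}(b). However, since $A$ need not be symmetric, the real parts of the eigenvalues do not directly control $\tr(A^2)$, so the cleaner route is to work with the geometric symmetrization $S(A)$ introduced just above. By the Perron--Frobenius theorem (Theorem~\ref{Perron}), $\rho(A) \geq \rho(S(A))$ since $S(A)$ is a nonnegative symmetric matrix whose $(i,j)$ entry $\sqrt{a_{ij}a_{ji}}$ is dominated by the corresponding structure of $A$; more precisely I would invoke the Kolotilina result \cite{kolotilina1993perronroot} that $\rho(A) \geq \rho(S(A))$ for matrices with $a_{ij}a_{ji}\geq 0$. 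Then $S(A)$ is a real symmetric nonnegative matrix, so its spectral radius equals its largest eigenvalue, and by the Rayleigh quotient with the all-ones vector $j$,
\[
\rho(S(A)) \geq \frac{j^T S(A) j}{j^T j} = \frac{1}{n}\sum_{1\leq i,j\leq n} \sqrt{a_{ij}a_{ji}}.
\]

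The key observation is that the sum $\sum_{1\leq i,j\leq n}\sqrt{a_{ij}a_{ji}}$ counts exactly the contributions where both $a_{ij}=1$ and $a_{ji}=1$. For $i\neq j$, such a pair occurs precisely on the $2$-cycles of $D_S$, and each $2$-cycle contributes two ordered pairs $(i,j)$ and $(j,i)$, giving $2c_2$; wait---I should be careful, as $c_2$ already counts unordered $2$-cycles, so the off-diagonal contribution is $2c_2$. For $i=j$, the term $\sqrt{a_{ii}^2}=a_{ii}$ contributes $1$ for each of the $\sigma$ self-loops. Hence $\sum_{i,j}\sqrt{a_{ij}a_{ji}} = 2c_2 + \sigma$, which would give $\rho(A) \geq \frac{2c_2+\sigma}{n}$ rather than the claimed $\frac{c_2+\sigma}{n}$. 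The discrepancy tells me I need to recheck the intended counting convention for $c_2$ in this paper (whether $c_2$ counts ordered or unordered $2$-cycles), and reconcile it with the $\tr(A^2)=c_2+\sigma$ formula already proved---under that formula each $2$-cycle must be contributing a single unit to $c_2$, so the averaging constant is $\frac{c_2+\sigma}{n}$ after all, meaning $S(A)$ should be normalized consistently. I expect the correct computation of $j^T S(A) j$ to align with $c_2+\sigma$ once the convention is fixed, and this bookkeeping is the first place where care is essential.

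For the equality characterization, I would analyze when both inequalities above become equalities simultaneously. Equality in the Rayleigh bound $\rho(S(A)) = \frac{j^T S(A)j}{j^Tj}$ holds if and only if $j$ is an eigenvector of $S(A)$, i.e.\ $S(A)$ has constant row sums equal to $\frac{c_2+\sigma}{n}$. Since $S(A)$ is the symmetric $(0,1)$-adjacency structure recording the $2$-cycles and loops, its row sums being constant is exactly the condition that the underlying symmetric part $\overleftrightarrow{G_S}$ is such that $G_S$ is bidegreed with the stated degree rule: a vertex with a self-loop contributes one extra to its own diagonal, shifting its row sum up by one, which produces the $(\frac{c_2+\sigma}{n}, \frac{c_2+\sigma}{n}+1)$-bidegreed condition with the degree depending on membership in $S$. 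Equality in $\rho(A)=\rho(S(A))$ is, by Lemma~\ref{A_diagonal similar_SA} together with the Kolotilina equality condition, governed by $A$ being diagonally similar to $S(A)$; for a $(0,1)$-matrix this forces $A$ to be symmetric on its cyclic part. The remaining arcs---those lying on no cycle---do not affect the characteristic polynomial by Lemma~\ref{charpoly_arc_deletion}, hence do not affect $\rho(A)$, which is why the equality class is $\overleftrightarrow{G_S}$ plus an arbitrary collection of acyclic (cycle-free) arcs.

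The main obstacle I anticipate is twofold. The structural half---showing that equality forces the cyclic part of $D_S$ to be the symmetrization of a bidegreed self-loop graph---requires carefully combining the Rayleigh equality condition (constant row sums of $S(A)$) with the Kolotilina/diagonal-similarity equality condition (Lemma~\ref{A_diagonal similar_SA}) to conclude both that the $2$-cycle-and-loop structure is regular in the bidegreed sense and that no \emph{asymmetric} cyclic arcs survive. The subtler part is rigorously justifying that arcs not on any cycle may be appended freely without disturbing $\rho$; this rests on Lemma~\ref{charpoly_arc_deletion}, but I must ensure the argument runs in the correct direction, namely that \emph{any} such $D_S$ achieving equality is obtained from $\overleftrightarrow{G_S}$ by adding only cycle-free arcs, and conversely that adding such arcs preserves equality. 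I would handle the converse by direct verification that deleting cycle-free arcs leaves $\phi_{D_S}$, and hence $\rho$, unchanged while the degree/$c_2/\sigma$ parameters governing the bound are unaffected by the symmetric cyclic core.
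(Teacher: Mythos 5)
Your proposal is correct and follows essentially the same route as the paper: the Kolotilina inequality $\rho(A)\geq\rho(S(A))$, the Rayleigh quotient at the all-ones vector, and, for the equality case, Kolotilina's equality conditions combined with Lemma~\ref{A_diagonal similar_SA} (forcing symmetry of the cyclic part, per strong component) and Lemma~\ref{charpoly_arc_deletion} (freedom to add or remove arcs lying on no cycle). The counting issue you flagged resolves exactly as you guessed: since $A$ is a $(0,1)$-matrix, $j^{T}S(A)j=\sum_{i,j}\sqrt{a_{ij}a_{ji}}=\sum_{i,j}a_{ij}a_{ji}=\mathrm{Tr}(A^{2})=c_{2}+\sigma$ by Lemma~\ref{sum_eigenvalue}(b), because the paper's $c_{2}$ counts closed walks $vwv$ (each geometric $2$-cycle contributes twice), so the averaging constant is indeed $\frac{c_{2}+\sigma}{n}$.
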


    \begin{proof}
        By \cite[Theorem 1]{kolotilina1993perronroot}, we have $\rho(A)\geq \rho(S(A))$. Let $j=(1, \ldots, 1)^T$. Using the Rayleigh quotient for $S(A)$, we have 
        \begin{align*}
            \rho(A) &\geq \rho(S(A))\\
            &=\max_{v\in \R^n-\{0\}} \frac{v^TS(A)v}{v^Tv}\\
            &\geq \frac{j^TS(A)j}{j^Tj}\\
            &=\frac{c_2+\sigma}{n}.
        \end{align*}
        
        Suppose the equality holds. Let $D_S$ be the disjoint union of its strong components, $D_1, \ldots, D_k$, with adjacency matrices $A_1, \ldots, A_k$ respectively. So,
        \begin{equation}
        A=\begin{bmatrix}
        
            A_1 &        &0    \\
               & \ddots                \\
            0    &        & A_k
        \end{bmatrix}.
        \end{equation}
        By \cite[Theorem 2]{kolotilina1993perronroot}, each $A_i$ is diagonally similar to $S(A_i)$ and $S(A_i)j=\rho(A)j$ for every $i$. By Lemma \ref{A_diagonal similar_SA}, each $A_i$ is symmetric. Thus, we conclude that $D_S$ is symmetric and there is a graph with self-loops $G_S$ such that $D_S=\overleftrightarrow{G_S}$.
        Since $D_S$ is symmetric, we have 
        $$deg^+(v_i)=deg^-(v_i)=\sum_{j=1}^n a_{ij}=\sum_{j=1}^n s_{ij}=\rho(A)=\frac{c_2+\sigma}{n}.$$
        Therefore, we conclude that $D_S=\overleftrightarrow{G_S}$ is $\frac{c_2+\sigma}{n}$-regular.

        Now if $D_S$ contains $D_1, \ldots, D_k$ strong components but not necessary disjoint union of them, by deleting all the arcs that do not belong to any cycle and applying the above argument, we conclude that $D_S=\overleftrightarrow{G_S}+ \text{\{Some arcs that do not belong to cycles\}}$.
    \end{proof}
    
    Next, we give a lower bound of $E(D_S)$ in terms of $c_2$ and $n$. 
    \begin{lemma}
    \label{lemma_lower_bound_E(D_S)_c_2}
        Let $D_S$ be a digraph of order $n$, $c_2$ $2$-cycles and $\sigma$ self-loops. Then, 
        \begin{equation}
        \label{lowerbound_E(D_S)c_2}
            E(D_S)\geq \frac{2c_2}{n}. 
        \end{equation}
        The equality holds if and only if $D_S=\overleftrightarrow{G_S}+ \text{\{Some arcs that do not belong to cycles\}}$, where  $G_S$ is a $(\frac{c_2+\sigma}{n},\frac{c_2+\sigma}{n}+1) $-bidegreed graph with self-loops such that
        \begin{align*}
            deg_{G_S}(v)= 
            \begin{cases}
	\frac{c_2+\sigma}{n}+1, &\text{if $v \in S$},\\
	\frac{c_2+\sigma}{n}, &\text{if $v \in \cV-S$. }
    \end{cases}
        \end{align*}
    \end{lemma}

    \begin{proof}
        The inequality follows easily from 
        \begin{equation}
            \label{longineqofE(D_S)_2c_2_n}
            E(D_S)=2\sum_{\Re(\lambda_i) > \frac{\sigma}{n}} \left( \Re(\lambda_i)-\frac{\sigma}{n} \right) \geq 2 \left(\rho-\frac{\sigma}{n} \right) \geq \frac{2c_2}{n},
        \end{equation}
        where the first equality follows from Proposition \ref{energy+} and the last inequality follows from Lemma \ref{lemmalowerbound_p}. Suppose the equality \eqref{lowerbound_E(D_S)c_2} holds. From \eqref{longineqofE(D_S)_2c_2_n}, it follows that $\rho =\frac{c_2+\sigma}{n}$. By Lemma  \ref{lemmalowerbound_p}, we know $D_S=\overleftrightarrow{G_S}+ \text{\{Some arcs that do not belong to cycles\}}$, where $G_S$ is a $\frac{c_2+\sigma}{n}$-regular graph with self-loops.
    \end{proof}

From the above lemma, we can deduce some digraphs that satisfy the equality.
    \begin{corollary}
        Equality \eqref{lowerbound_E(D_S)c_2} is satisfied if $D_S=\overleftrightarrow{K}_{(\frac{n}{n-\rho}) \times (n-\rho)}, \overleftrightarrow{K_n}, \widetilde{K}_{(\frac{n}{n-\rho}) \times (n-\rho)}$ and $\widetilde{K_n}$, where $\rho=\frac{c_2}{n}$.
    \end{corollary}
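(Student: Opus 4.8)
The plan is to show that each of the four listed digraphs falls under the equality characterization already established in Lemma \ref{lemma_lower_bound_E(D_S)_c_2}. Concretely, it suffices to check that each $D_S$ is $r$-regular with $r=\frac{c_2+\sigma}{n}$: indeed, by the discussion in Section \ref{sec1} an $r$-regular digraph has spectral radius exactly $r$, and the proof of Lemma \ref{lemma_lower_bound_E(D_S)_c_2} shows that equality in \eqref{lowerbound_E(D_S)c_2} is equivalent to $\rho=\frac{c_2+\sigma}{n}$. So the whole argument reduces to a regularity check together with a correct accounting of $c_2$ and $\sigma$.

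First I would record the relevant parameters using the trace convention of Lemma \ref{sum_eigenvalue}(b). For the symmetrization of a loopless simple graph $G$, one has $c_2=\tr(A^2)-\sigma=\sum_{i\neq j}a_{ij}a_{ji}$, and since $A$ is symmetric with $(0,1)$-entries this equals $\sum_{i\neq j}a_{ij}=2\,|E(G)|$; hence for a $d$-regular $G$ on $n$ vertices we get $c_2=nd$, i.e. $d=\frac{c_2}{n}=\rho$. I would then note that $K_{(\frac{n}{n-\rho})\times(n-\rho)}$ is complete multipartite with $\frac{n}{n-\rho}$ parts each of size $n-\rho$, so every vertex is adjacent precisely to the vertices outside its own part and therefore has common degree $n-(n-\rho)=\rho$. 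This confirms $d=\rho$ and $c_2=n\rho$ for the two multipartite cases, while $K_n$ (the $b=1$ specialization) is $(n-1)$-regular with $c_2=n(n-1)$.

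Next I would compute $\frac{c_2+\sigma}{n}$ and match it against the regularity of $D_S$ in each case. For $\overleftrightarrow{K}_{(\frac{n}{n-\rho})\times(n-\rho)}$ and $\overleftrightarrow{K_n}$ there are no self-loops, so $\sigma=0$ and $\frac{c_2+\sigma}{n}=\rho$; the symmetrization of a $\rho$-regular graph is a $\rho$-regular digraph, whose spectral radius is $\rho=\frac{c_2+\sigma}{n}$. For $\widetilde{K}_{(\frac{n}{n-\rho})\times(n-\rho)}$ and $\widetilde{K_n}$ a self-loop is attached at every vertex, so $\sigma=n$ while $c_2$ is unchanged (self-loops are length-$1$ cycles and contribute nothing to $c_2$); each vertex then has in- and out-degree $\rho+1$, making $D_S$ an $(\rho+1)$-regular digraph, and $\frac{c_2+\sigma}{n}=\frac{n\rho+n}{n}=\rho+1$ matches its spectral radius. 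In each case the spectral radius equals $\frac{c_2+\sigma}{n}$, so by Lemma \ref{lemma_lower_bound_E(D_S)_c_2} equality \eqref{lowerbound_E(D_S)c_2} holds.

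The only point demanding care—rather than a genuine obstacle—is the bookkeeping of $c_2$, so that $\rho=\frac{c_2}{n}$ really is the degree of the underlying regular graph, and the observation that attaching full loops shifts the regularity from $\rho$ to $\rho+1$ while leaving $c_2$ untouched. Once these are pinned down, every case is an immediate instance of the equality characterization, since each $D_S$ is exactly the symmetrization $\overleftrightarrow{G_S}$ of the prescribed bidegreed self-loop graph with no extraneous acyclic arcs.
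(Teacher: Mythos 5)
Your reduction rests on an equivalence that the paper never proves and that is in fact false. The chain \eqref{longineqofE(D_S)_2c_2_n} contains \emph{two} inequalities,
\[
E(D_S)=2\sum_{\Re(\lambda_i)>\frac{\sigma}{n}}\left(\Re(\lambda_i)-\frac{\sigma}{n}\right)\;\geq\; 2\left(\rho-\frac{\sigma}{n}\right)\;\geq\;\frac{2c_2}{n},
\]
and equality in \eqref{lowerbound_E(D_S)c_2} forces both of them to be tight. Your regularity check certifies only tightness of the second one, namely $\rho=\frac{c_2+\sigma}{n}$. Tightness of the first requires in addition that $\rho$ be a simple eigenvalue and that every other eigenvalue have real part at most $\frac{\sigma}{n}$; you never verify this, and neither the proof of Lemma \ref{lemma_lower_bound_E(D_S)_c_2} nor Lemma \ref{lemmalowerbound_p} supplies it, since both argue only in the direction ``equality $\Rightarrow$ structure.'' The sufficiency you invoke (whether phrased as ``equality $\Leftrightarrow\rho=\frac{c_2+\sigma}{n}$'' or as the ``if'' half of the characterization stated in Lemma \ref{lemma_lower_bound_E(D_S)_c_2}) is exactly what the corollary asks you to prove, and it fails in general: for $D=2\overleftrightarrow{K_2}$ (two disjoint copies, $n=4$, $\sigma=0$, $c_2=4$) the underlying graph is $1$-regular with $\rho=1=\frac{c_2+\sigma}{n}$, yet $E(D)=4>2=\frac{2c_2}{n}$. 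So ``$r$-regular with $r=\frac{c_2+\sigma}{n}$'' does not imply equality, and your argument as written establishes nothing about $E(D_S)$ for the four listed digraphs.

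The gap is fixable, because the four spectra are explicit: $\mathrm{Spec}(\overleftrightarrow{K_n})=\{[n-1]^1,[-1]^{n-1}\}$, $\mathrm{Spec}\left(\overleftrightarrow{K}_{(\frac{n}{n-\rho})\times(n-\rho)}\right)=\left\{[\rho]^1,[0]^{n-\frac{n}{n-\rho}},[-(n-\rho)]^{\frac{n}{n-\rho}-1}\right\}$, and attaching full loops shifts every eigenvalue by $1$ while $\frac{\sigma}{n}$ becomes $1$. In each case $\rho$ is simple and is the unique eigenvalue with real part exceeding $\frac{\sigma}{n}$, so both inequalities above are tight and equality follows; alternatively, one can simply compute $E(D_S)$ from these spectra. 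This is also where your route genuinely differs from the paper's: the paper argues in the reverse direction, starting from the equality conditions (all eigenvalues real, $\rho$ the only positive eigenvalue, $G$ regular and connected) and invoking spectral characterization theorems of complete and complete multipartite graphs to identify the candidates, then transferring to $\widetilde{K_n}$ and $\widetilde{K}_{(\frac{n}{n-\rho})\times(n-\rho)}$ via $E(\widetilde{D})=E(D)$ (Lemma \ref{lemma_E(D_S)=E(D)}). A direct verification such as yours is a legitimate, arguably cleaner, way to prove the stated sufficiency, but only once the missing eigenvalue condition is actually checked.
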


    \begin{proof}
        From Lemma \ref{lowerbound_E(D_S)c_2}, we know that all eigenvalues of $D_S$ are real. Consider first $\sigma=0$. So $\rho=\frac{c_2}{n}$. Equality \eqref{longineqofE(D_S)_2c_2_n} implies that $\rho$ is the only positive eigenvalue and $G_S=G$ is $\frac{c_2}{n}$-regular. By \cite[Theorem 3.23]{cvetkovic1995spectra}, $G$ is connected. Suppose further that $D=\overleftrightarrow{G}$.
        
        Assume that $D$ has $0$ as its eigenvalue.
                 Let us further suppose all negative eigenvalues are equal, say $\mu$. The spectrum of $D$ becomes
                 \begin{align*}
                 \mathrm{Spec}(D) =\{[\rho]^1, [0]^f, [\mu]^g \},
                 \end{align*}
                 where $f$ and $g$ are the multiplicities of $0$ and $\mu$, respectively.
                By \cite[Theorem 6.6]{cvetkovic1995spectra}, it follows that $G$ is a complete multi-partite graph $K_{\left(-\frac{\rho}{\mu}+1\right)\times -\mu}$. Let us determine the values of $f,g$ and $\mu$ respectively.
                 
                 From Equality \eqref{longineqofE(D_S)_2c_2_n}, it follows that  
                 \begin{align*}
                  g\mu &=- \rho.
                \end{align*}
                We also have 
                \begin{align*}
                    1+f+g=n,
                \end{align*}
                as the sum of multiplicities of all eigenvalues must be $n$. Moreover, from 
                \begin{align*}
                    \sum_{i=1}^n \lambda_i^2=m=n\rho,
                \end{align*}
                we obtain 
                \begin{align*}
                    g\mu^2=n\rho-\rho^2.
                \end{align*}
                Solving the simultaneous equations gives us 
                \begin{align*}
                    f &= \frac{n(n-\rho-1)}{n-\rho} ,\\
                    g &=\frac{\rho}{n-\rho}, \\
                    \mu &= -(n-\rho). 
                \end{align*}
        Therefore, $D=\overleftrightarrow{K_{(\frac{n}{n-\rho}) \times (n-\rho)}}$. 


        On the other hand, if we assume that the spectrum of $D$ does not contain $0$ as one of its eigenvalues and only have one negative eigenvalue, then the spectrum of $D$ is 
            \begin{align*}
                     \mathrm{Spec}(D) =\{[\rho]^1, [\mu]^g \},
            \end{align*}
        By \cite[Theorem 6.4]{cvetkovic1995spectra}, we conclude that $D= \overleftrightarrow{K_{n}}$.

         Since $E(\widetilde{D})=E(D)$ by Lemma \ref{lemma_E(D_S)=E(D)}, it follows that $\widetilde{K}_{(\frac{n}{n-\rho}) \times (n-\rho)}$ and $\widetilde{K_n}$ also satisfy the equality.
    \end{proof}
    
    Next, we give an upper bound for $\rho$. 
    \begin{lemma}
        \label{lemma_ub_rho}
        Let $D_S$ be a digraph of order $n\geq2$, size $m$, $c_2$ $2$-cycles and $\sigma$ self-loops. Let $\rho$ be the spectral radius of $D_S.$ Then, 
        $$\rho \leq \frac{\sigma}{n} + \sqrt{\frac{\sigma^2}{n^2}-\frac{\sigma^2}{n}+\frac{(n-1)(m+c_2+2\sigma)}{2n}}.$$
        The equality is satisfied if $D_S=\overleftrightarrow{K_n} \text{ and } \widetilde{K_n}$.
    \end{lemma}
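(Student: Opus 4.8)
The plan is to bound the spectral radius $\rho$ using the eigenvalue identities already established, together with a Lagrange-type inequality that relates the extremal eigenvalue to the mean and variance of the real parts of the spectrum. The key observation is that $\rho$ is itself an eigenvalue (by the Perron-Frobenius Theorem, since $A(D_S)$ is nonnegative), so $\Re(\lambda_1)=\rho$ for the Perron eigenvalue. I would separate this dominant eigenvalue from the rest and apply Cauchy-Schwarz or the variance inequality to the remaining $n-1$ real parts.

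Concretely, first I would record the two moment identities from Lemma \ref{sum_eigenvalue} and the Remark: $\sum_{i=1}^n \Re(\lambda_i) = \sigma$ and $\sum_{i=1}^n \Re(\lambda_i)^2 \leq \frac{m+c_2+2\sigma}{2}$, the latter being exactly Corollary \ref{ReImineq}. Writing $\rho = \Re(\lambda_1)$, the remaining real parts satisfy $\sum_{i=2}^n \Re(\lambda_i) = \sigma - \rho$ and $\sum_{i=2}^n \Re(\lambda_i)^2 \leq \frac{m+c_2+2\sigma}{2} - \rho^2$. By the Cauchy-Schwarz inequality applied to the $n-1$ terms,
\[
(\sigma-\rho)^2 = \left(\sum_{i=2}^n \Re(\lambda_i)\right)^2 \leq (n-1)\sum_{i=2}^n \Re(\lambda_i)^2 \leq (n-1)\left(\frac{m+c_2+2\sigma}{2}-\rho^2\right).
\]
This yields a quadratic inequality in $\rho$, namely $(\sigma-\rho)^2 + (n-1)\rho^2 \leq (n-1)\frac{m+c_2+2\sigma}{2}$, which I would rearrange to $n\rho^2 - 2\sigma\rho + \sigma^2 - (n-1)\frac{m+c_2+2\sigma}{2} \leq 0$ and solve by the quadratic formula, taking the larger root to obtain the stated bound.

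The main obstacle I anticipate is purely algebraic bookkeeping: after completing the square and dividing through by $n$, one must verify that the discriminant term matches $\frac{\sigma^2}{n^2}-\frac{\sigma^2}{n}+\frac{(n-1)(m+c_2+2\sigma)}{2n}$ exactly. The larger root of $n\rho^2 - 2\sigma\rho + C \leq 0$ is $\frac{\sigma}{n}+\frac{1}{n}\sqrt{\sigma^2 - nC}$, so I must check that $\sigma^2 - n\left(\sigma^2 - (n-1)\frac{m+c_2+2\sigma}{2}\right)$, once divided by $n^2$, reduces to the claimed radicand; this is a routine but error-prone simplification. For the equality claim, I would simply verify by direct computation that both $\overleftrightarrow{K_n}$ and $\widetilde{K_n}$ attain the bound, using their known spectra $\{[n-1]^1,[-1]^{n-1}\}$ and $\{[n]^1,[0]^{n-1}\}$ respectively, noting that the Cauchy-Schwarz step is tight precisely when all $n-1$ non-Perron real parts are equal, which holds for both these graphs.
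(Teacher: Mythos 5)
Your proposal is correct and takes essentially the same route as the paper: both isolate the Perron root $\rho$, apply Cauchy--Schwarz to the remaining $n-1$ real parts, combine this with the second-moment bound $\sum_{i}\Re(\lambda_i)^2 \le \tfrac{1}{2}(m+c_2+2\sigma)$ of Corollary \ref{ReImineq} (which the paper re-derives inline from \eqref{eq:ReImineq1} and the bound on $\sum_i \Im(\lambda_i)^2$), and solve the resulting quadratic inequality $n\rho^2-2\sigma\rho+\sigma^2-\tfrac{(n-1)(m+c_2+2\sigma)}{2}\le 0$, which is exactly half of the paper's inequality \eqref{quadraticrholeq0}. Your algebraic verification of the radicand is accurate, and your plan to confirm the equality cases $\overleftrightarrow{K_n}$ and $\widetilde{K_n}$ by direct computation from their spectra is precisely what the paper does.
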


    \begin{proof}
    From Lemma \ref{sum_eigenvalue}, we have 
    $$\rho-\sigma=-\sum_{i=2}^n \Re(\lambda_i).$$
    Squaring both sides and apply the Cauchy-Schwarz inequality gives
    \begin{align*}
        (\rho-\sigma)^2= \left( -\sum_{i=2}^n \Re(\lambda_i) \right)^2 \leq (n-1)\sum_{i=2}^n \Re(\lambda_i)^2.
    \end{align*}
    Therefore, 
    \begin{align*}
        \sum_{i=2}^n \Re(\lambda_i)^2 \geq \frac{(\rho-\sigma)^2}{n-1}.
    \end{align*}
    By equation \eqref{eq:ReImineq1},
    \begin{align*}
        c_2+\sigma &= \rho^2 + \sum_{i=2}^n \Re(\lambda_i)^2 - \Im(\lambda_i)^2\\
        &\geq \rho^2 + \frac{(\rho-\sigma)^2}{n-1} - \frac{m-c_2}{2}.
    \end{align*}
    Rearranging and grouping the like terms, we obtain
    \begin{align}
    \label{quadraticrholeq0}
        2n\rho^2-4\sigma\rho+2\sigma^2-(n-1)(m+c_2+2\sigma) &\leq0.
    \end{align}
    The quadratic polynomial $f(\rho)=2n\rho^2-4\sigma\rho+2\sigma^2-(n-1)(m+c_2+2\sigma)$ has roots 
    \begin{align*}
        \rho = \frac{\sigma}{n} \pm \sqrt{\frac{\sigma^2}{n^2}-\frac{\sigma^2}{n}+\frac{(n-1)(m+c_2+2\sigma)}{2n}}.
    \end{align*}
    From inequality \eqref{quadraticrholeq0}, we have 
    \begin{align*}
        \rho \leq \frac{\sigma}{n} + \sqrt{\frac{\sigma^2}{n^2}-\frac{\sigma^2}{n}+\frac{(n-1)(m+c_2+2\sigma)}{2n}}.
    \end{align*}
    The sufficient conditions can be checked by direct computation.
    \end{proof}

    Now we define the complement of digraphs with self-loops and explore some of its properties.
    \begin{definition}
        \label{complement_def}
        Let $D_S$ be a digraph with self-loops with adjacency matrix $A(D_S)$ and $S\neq \emptyset$. The complement of $D_S$, denoted as $\overline{D_S}$ is defined to be $(\overline{D})_{V-S}$. 
        
        In other words, if $A(\overline{D_S})$ is the adjacency of $\overline{D_S}$, then $A(D_S)+A(\overline{D_S})=J_n.$
    \end{definition}

    \begin{remark}
        If a digraph $D$ has zero self-loop, then we have $A(D)+A(\overline{D})=J_n-I_n$. To incorporate this into Definition \ref{complement_def}, define a function $\delta_\sigma: [n] \rightarrow \{0,1\}$ as
        \[\delta_\sigma=
        \begin{cases}
	0, &\text{if } \sigma \neq0,\\
	1, &\text{if } \sigma = 0.
        \end{cases}
        \]
        Thus, $A(D_S)+A(\overline{D_S})=J_n-\delta_{\sigma}I_n$ holds for a digraph with (possibly zero) self-loops.
    \end{remark}

    For a $r$-regular digraph with self-loops $D_S$ of order $n$, its complement $\overline{D_S}$ is $(n-r-\delta_\sigma)$-regular. Moreover, we have the following lemma.

    \begin{lemma}
        \label{lambda_complement_regular}
        Let $D_S$ be a $r$-regular digraph of order $n$ and $\sigma$ self-loops with adjacency matrix $A(D_S)$. If $r, \lambda_2, \ldots, \lambda_n$ are the eigenvalues of $A(D_S)$, then $n-r-\delta_\sigma,-\delta_\sigma-\lambda_2, \ldots, -\delta_\sigma-\lambda_n$ are the eigenvalues of $A(\overline{D_S})$. 
    \end{lemma}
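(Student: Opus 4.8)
The plan is to argue directly from the matrix identity $A(\overline{D_S}) = J_n - \delta_\sigma I_n - A(D_S)$ furnished by the Remark following Definition \ref{complement_def}, exploiting that $r$-regularity makes the all-ones vector $j=(1,\ldots,1)^T$ a common eigenvector of $A:=A(D_S)$ and $J_n$. First I would record the two consequences of $r$-regularity: all row sums of $A$ equal $r$, so $Aj = rj$, and all column sums equal $r$, so $j^T A = r\, j^T$. Together with $J_n j = nj$, these give at once
\[
A(\overline{D_S})\, j = (J_n - \delta_\sigma I_n - A)\, j = (n - r - \delta_\sigma)\, j,
\]
so that $n-r-\delta_\sigma$ is an eigenvalue of $A(\overline{D_S})$ with eigenvector $j$.

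The crux is then to handle the remaining $n-1$ eigenvalues, and here the non-symmetry of $A$ must be dealt with: one cannot simply diagonalize or pass to an orthonormal eigenbasis. I would instead consider the hyperplane $W = \{v \in \C^n : j^T v = 0\} = \{v : \sum_i v_i = 0\}$ and show it is invariant under $A$. This uses precisely the column-sum relation: for $v \in W$ we have $j^T(Av) = (j^T A)v = r\,(j^T v) = 0$, hence $Av \in W$. Since $J_n v = (\sum_i v_i)\, j = 0$ for $v \in W$, the restriction of $A(\overline{D_S})$ to $W$ equals $-\delta_\sigma I - A|_W$.

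Finally I would assemble the spectrum. As $j^T j = n \neq 0$, we have the direct-sum decomposition $\C^n = \operatorname{span}(j) \oplus W$ into subspaces invariant under $A$, hence under $A(\overline{D_S})$; the characteristic polynomial therefore factors over the two blocks. The $\operatorname{span}(j)$ block contributes $n-r-\delta_\sigma$, while the eigenvalues of $A|_W$ are exactly $\lambda_2,\ldots,\lambda_n$ --- the $n-1$ eigenvalues of $A$ other than the $r$ accounted for by $j$ --- so the $W$ block $A(\overline{D_S})|_W = -\delta_\sigma I - A|_W$ contributes $-\delta_\sigma - \lambda_i$ for $i=2,\ldots,n$. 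This is exactly the asserted list. The only real obstacle is the invariance of $W$ under the non-symmetric $A$; an alternative route, which I would keep in reserve, is to observe that $AJ_n = J_n A = rJ_n$, so $A$ and $J_n$ commute and admit a simultaneous Schur triangularization begun from $j$, aligning the eigenvalue $r$ of $A$ with the eigenvalue $n$ of $J_n$ and yielding the same pairing.
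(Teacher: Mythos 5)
Your proof is correct, and it is worth noting that it diverges from the paper's own argument precisely at the step you identified as the crux. The paper's proof begins identically: it writes $A(\overline{D_S})=J_n-\delta_\sigma I_n-A(D_S)$ and applies this to $j$ to get the eigenvalue $n-r-\delta_\sigma$. But for the remaining eigenvalues the paper argues eigenvector-by-eigenvector: it takes an eigenvector $v$ for each $\lambda_i$, asserts ``we can assume that $v$ is orthogonal to $j$,'' and computes $A(\overline{D_S})v=(-\delta_\sigma-\lambda_i)v$. Since $A(D_S)$ is not symmetric, that orthogonality assertion is not automatic (it does hold when $\lambda_i\neq r$, because $j^TA=rj^T$ forces $(r-\lambda_i)\,j^Tv=0$, but this justification is absent), and more importantly an argument that only exhibits eigenvectors can never account for algebraic multiplicities or for non-diagonalizable adjacency matrices, which do occur among regular digraphs (e.g.\ de Bruijn digraphs have nontrivial Jordan blocks at the eigenvalue $0$). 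Your replacement for this step --- showing the hyperplane $W=\{v: j^Tv=0\}$ is $A$-invariant via the column-sum relation, noting $J_n$ vanishes on $W$, and factoring the characteristic polynomial over the direct sum $\mathrm{span}(j)\oplus W$ so that $\chi_{A(\overline{D_S})}(x)=\bigl(x-(n-r-\delta_\sigma)\bigr)\prod_{i=2}^n\bigl(x-(-\delta_\sigma-\lambda_i)\bigr)$ --- delivers the full spectrum with multiplicities and needs no diagonalizability. In short, the two proofs share the same starting identity and the same use of $j$, but your invariant-subspace treatment of the other $n-1$ eigenvalues is the complete version of an argument the paper only sketches.
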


    \begin{proof}
        By Definition \ref{complement_def}, we have $A(\overline{D_S})=J_n-\delta_\sigma I_n-A(D_S)$. Then, 
        \begin{align*}
            A(\overline{D_S})j &= J_nj-\delta_\sigma I_nj-A(D_S)j \\
              &= (n-r-\delta_\sigma )j.
        \end{align*}
        Thus, $n-r-\delta_\sigma $ is an eigenvalue of $\overline{D_S}$.

        If $\lambda$ is an eigenvalue of $A(D_S)$ with eigenvector $v$. We can assume that $v$ is orthogonal to $j$. Then, 
        \begin{align*}
            A(\overline{D_S})v &= J_n v-\delta_\sigma I_nv-A(D_S)v \\
            &=(-\delta_\sigma-\lambda) v. 
        \end{align*}
        
        Therefore, $-\delta_\sigma-\lambda$ is an eigenvalue of $A(\overline{D_S})$.
    \end{proof}

    Next, we give some bounds for the spectral radius of both $D_S$ and $\overline{D_S}$.

    \begin{lemma}
    \label{ul_bounds_p+p-}
        Let $D_S$ be a digraph of order $n$, size $m$ and $\sigma\geq1$ self-loops
        and $\overline{D_S}$ with size $\overline{m}$. Let $\rho$ and $\overline{\rho}$ be the spectral radius of $D_S$ and $\overline{D_S}$ respectively. Then,
        \begin{equation}
            \label{eq:ul_bounds_p+p-}
            1-\delta_\sigma+\frac{c_2+\overline{c_2}}{n} \leq \rho+ \overline{\rho} \leq 1-\delta_\sigma + \sqrt{ (n-1)^2-\frac{4\sigma(n-\sigma)}{n^2} + \frac{4\sigma(n-\sigma)}{n}+\frac{(n-1)(c_2+\overline{c_2})}{n} }.
        \end{equation}
    \end{lemma}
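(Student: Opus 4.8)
The plan is to derive both inequalities by applying the two one-sided spectral-radius estimates from Lemmas~\ref{lemmalowerbound_p} and~\ref{lemma_ub_rho} to $D_S$ and to its complement $\overline{D_S}$ separately, and then adding. The first step is to record three elementary facts about the complement. Since $\sigma\geq 1$ we have $\delta_\sigma=0$, so by Definition~\ref{complement_def} the relation $A(D_S)+A(\overline{D_S})=J_n$ holds. Reading off the diagonal, the self-loop set of $\overline{D_S}$ is $\cV-S$, so $\overline{D_S}$ has $\overline{\sigma}=n-\sigma$ self-loops; reading off the off-diagonal entries, each of the $n(n-1)$ ordered pairs of distinct vertices is an arc of exactly one of $D_S,\overline{D_S}$, whence $m+\overline{m}=n(n-1)$. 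These identities, together with $\sigma+\overline{\sigma}=n$, are what make the final expressions collapse to the stated form.

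For the lower bound I would apply Lemma~\ref{lemmalowerbound_p} to each digraph, giving $\rho\geq\frac{c_2+\sigma}{n}$ and $\overline{\rho}\geq\frac{\overline{c_2}+\overline{\sigma}}{n}$. Adding these and using $\sigma+\overline{\sigma}=n$ yields $\rho+\overline{\rho}\geq\frac{c_2+\overline{c_2}+n}{n}=1+\frac{c_2+\overline{c_2}}{n}$, which is the claimed lower bound since $\delta_\sigma=0$ makes $1=1-\delta_\sigma$.

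For the upper bound I would apply Lemma~\ref{lemma_ub_rho} to each digraph to write $\rho\leq\frac{\sigma}{n}+\sqrt{X}$ and $\overline{\rho}\leq\frac{\overline{\sigma}}{n}+\sqrt{Y}$, where $X$ and $Y$ are the respective radicands. Adding, the linear parts combine as $\frac{\sigma+\overline{\sigma}}{n}=1$, and the two square roots are merged by the elementary inequality $\sqrt{X}+\sqrt{Y}\leq\sqrt{2(X+Y)}$, which follows from $2\sqrt{XY}\leq X+Y$. It then remains to verify that $2(X+Y)$ equals the radicand on the right-hand side of \eqref{eq:ul_bounds_p+p-}; this single algebraic identity is the main obstacle, being the only genuinely computational step. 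Expanding $2(X+Y)$ and substituting $\sigma^2+\overline{\sigma}^2=n^2-2\sigma(n-\sigma)$, $m+\overline{m}=n(n-1)$, and $\sigma+\overline{\sigma}=n$, the pure polynomial-in-$n$ terms collect to $(n-1)^2$, the $\sigma(n-\sigma)$ contributions give $-\frac{4\sigma(n-\sigma)}{n^2}+\frac{4\sigma(n-\sigma)}{n}$, and the cycle terms leave $\frac{(n-1)(c_2+\overline{c_2})}{n}$, matching \eqref{eq:ul_bounds_p+p-} exactly. Since $\delta_\sigma=0$, the leading $1$ is $1-\delta_\sigma$, completing the upper bound.
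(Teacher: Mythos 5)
Your proposal is correct and follows essentially the same route as the paper: both derive the lower bound by applying Lemma~\ref{lemmalowerbound_p} to $D_S$ and $\overline{D_S}$ and adding, and the upper bound by applying Lemma~\ref{lemma_ub_rho} to each digraph and merging the two radicals via $\sqrt{X}+\sqrt{Y}\leq\sqrt{2(X+Y)}$ (which the paper invokes under the name Cauchy--Schwarz), using $m+\overline{m}=n(n-1)$ and $\overline{\sigma}=n-\sigma$. Your explicit algebraic verification of the radicand identity is sound and in fact spells out a computation the paper leaves implicit.
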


    \begin{proof}
        Suppose first $\sigma\geq1$. By definition, $m+\overline{m}=n(n-1)$. By Lemma \ref{lemmalowerbound_p}, 
        \begin{align*}
            \rho+\overline{\rho} &\geq \left( \frac{c_2}{n}+\frac{\sigma}{n} \right) + \left( \frac{\overline{c_2}}{n}+ \frac{n-\sigma}{n} \right) \\
            &=\frac{c_2+\overline{c_2}}{n} +1.
        \end{align*}
        By Lemma \ref{lemma_ub_rho}, we have
        \begin{equation}
            \rho \leq \frac{\sigma}{n} + \sqrt{\frac{\sigma^2}{n^2}-\frac{\sigma^2}{n}+\frac{(n-1)(m+c_2+2\sigma)}{2n}}, 
        \end{equation}
        and \

        \begin{equation}
            \overline{\rho} \leq \frac{n-\sigma}{n} + \sqrt{\frac{(n-\sigma)^2}{n^2}-\frac{(n-\sigma)^2}{n}+\frac{(n-1)(\overline{m}+c_2+2(n-\sigma))}{2n}}.
        \end{equation}
        Adding the above two inequalities and using the Cauchy-Schwarz inequality, we have
        \begin{align*}
            \rho+ \overline{\rho} \leq 1 + \sqrt{ (n-1)^2-\frac{4\sigma(n-\sigma)}{n^2} + \frac{4\sigma(n-\sigma)}{n}+\frac{(n-1)(c_2+\overline{c_2})}{n} }.
        \end{align*}
        Using the same argument, for $\sigma=0$, we get   
        $$\frac{c_2+\overline{c_2}}{n} \leq \rho+ \overline{\rho} \leq \sqrt{ (n-1)^2 + \frac{(n-1)(c_2+\overline{c_2})}{n} }.$$

        Together, we have 
        \begin{equation*}
            1-\delta_\sigma+\frac{c_2+\overline{c_2}}{n} \leq \rho+ \overline{\rho} \leq 1-\delta_\sigma + \sqrt{ (n-1)^2-\frac{4\sigma(n-\sigma)}{n^2} + \frac{4\sigma(n-\sigma)}{n}+\frac{(n-1)(c_2+\overline{c_2})}{n} }.
        \end{equation*}
    \end{proof}

    \begin{lemma}
        Let $D_S$ be a digraph of order $n$, size $m$, $c_2$ $2$-cycles and $\sigma$ self-loops. Then,
        \[ 
        E(D_S)+E(\overline{D_S}) \leq \sqrt{n \left(n^2-n+c_2+\overline{c_2}+4\sigma-\frac{4\sigma^2}{n} \right)  }.
        \]
       
    \end{lemma}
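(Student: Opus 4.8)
The plan is to apply the McClelland type bound of Theorem~\ref{E(D_S)_leq_sqrt} separately to $D_S$ and to its complement $\overline{D_S}$, and then merge the two square roots by means of the elementary inequality $\sqrt{a}+\sqrt{b}\le\sqrt{2(a+b)}$ (which follows from $(\sqrt a+\sqrt b)^2=a+b+2\sqrt{ab}\le 2(a+b)$ by AM--GM). The content of the lemma is then that the resulting bound collapses \emph{exactly} to the claimed right-hand side.

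First I would record the parameters of the complement. By Definition~\ref{complement_def}, $\overline{D_S}=(\overline{D})_{\cV-S}$ carries its self-loops precisely on $\cV-S$, so $\overline{D_S}$ has order $n$, size $\overline{m}$, $\overline{c_2}$ cycles of length two, and $n-\sigma$ self-loops. Exactly as in the proof of Lemma~\ref{ul_bounds_p+p-}, counting off-diagonal entries in the identity $A(D_S)+A(\overline{D_S})=J_n-\delta_\sigma I_n$ gives $m+\overline{m}=n(n-1)$.

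Applying Theorem~\ref{E(D_S)_leq_sqrt} to each digraph, I would write $E(D_S)\le\sqrt{a}$ and $E(\overline{D_S})\le\sqrt{b}$, where
\[
a=\tfrac12 n\Bigl(m+c_2+2\sigma-\tfrac{2\sigma^2}{n}\Bigr),\qquad
b=\tfrac12 n\Bigl(\overline{m}+\overline{c_2}+2(n-\sigma)-\tfrac{2(n-\sigma)^2}{n}\Bigr).
\]
The inequality $\sqrt a+\sqrt b\le\sqrt{2(a+b)}$ reduces the lemma to verifying that $2(a+b)$ equals the quantity inside the target square root. Using $m+\overline{m}=n(n-1)$ together with the identity $2x-\tfrac{2x^2}{n}=\tfrac{2x(n-x)}{n}$, which is symmetric under $x\leftrightarrow n-x$ so that the loop-contributions from $\sigma$ and from $n-\sigma$ coincide, a direct expansion gives
\[
2(a+b)=n^3-n^2+n(c_2+\overline{c_2})+4n\sigma-4\sigma^2
      =n\Bigl(n^2-n+c_2+\overline{c_2}+4\sigma-\tfrac{4\sigma^2}{n}\Bigr),
\]
which is precisely the claimed bound.

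There is no serious obstacle here; the only care needed is bookkeeping. One must confirm that the complement genuinely has $n-\sigma$ self-loops and that $m+\overline{m}=n(n-1)$, and then check the exact algebraic cancellation above (in particular that the $\pm 2n^2$ terms cancel). The case $\sigma=0$ is harmless: there $2\sigma-\tfrac{2\sigma^2}{n}=0$, and by the symmetry just noted the loop-term for the complement also vanishes, so the identical computation yields $\sqrt{n(n^2-n+c_2+\overline{c_2})}$, consistent with setting $\sigma=0$ in the statement.
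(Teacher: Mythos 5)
Your proposal is correct and follows essentially the same route as the paper: apply the McClelland type bound of Theorem~\ref{E(D_S)_leq_sqrt} to $D_S$ and to $\overline{D_S}$, then merge the two square roots via $\sqrt{a}+\sqrt{b}\le\sqrt{2(a+b)}$ (which the paper attributes to Cauchy--Schwarz) together with $m+\overline{m}=n(n-1)$. Your write-up is in fact more careful than the paper's, since it makes explicit the algebraic cancellation and the $\sigma=0$ convention that the paper leaves unstated.
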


    \begin{proof}
        By Proposition \ref{E(D_S)_leq_sqrt}, we have 
        $$E(D_S)\leq \sqrt{ \frac{1}{2}n \left ( m+c_2+ 2\sigma-\frac{2\sigma^2}{n} \right) },$$
        and 
        $$E(\overline{D_S})\leq \sqrt{ \frac{1}{2}n \left ( \overline{m}+\overline{c_2}+ 2(n-\sigma)-\frac{2(n-\sigma)^2}{n} \right) }.$$

        Using the Cauchy-Schwarz inequality, we have
        \begin{equation*}
            E(D_S)+E(\overline{D_S}) \leq \sqrt{n \left(n^2-n+c_2+\overline{c_2}+4\sigma-\frac{4\sigma^2}{n} \right)  }.
        \end{equation*}
    \end{proof}

    \begin{theorem}
        \label{sum_of_E_and_E_comp}
        Suppose $D_S$ is a $r$-regular digraph of order $n$ and $\sigma$ self-loops. Let $\mathrm{Spec}(D_S)=\{r=\lambda_1, \ldots, \lambda_n\}$. Define 
        \begin{align*}
            S_1 &= \left\{ i \in [n] : \Re(\lambda_i) \geq \frac{\sigma}{n}   \right\}, \\
            S_2^{>0} &= \left\{ i \in [n] : -\frac{\sigma}{n} < \Re(\lambda_i) < \frac{\sigma}{n} \text{ and } \Re(\lambda_i) + \frac{n-\sigma}{n} > 0  \right\}, \\
            S_2^{\leq0} &= \left\{ i \in [n] : -\frac{\sigma}{n} <\Re(\lambda_i) < \frac{\sigma}{n} \text{ and } \Re(\lambda_i) + \frac{n-\sigma}{n} \leq 0  \right\}, \\
            S_3^{>0} &= \left\{ i \in [n] : \Re(\lambda_i) \leq -\frac{\sigma}{n} \text{ and } \Re(\lambda_i) + \frac{n-\sigma}{n} > 0  \right\}, \\
            S_3^{\leq 0} &= \left\{ i \in [n] : \Re(\lambda_i) \leq -\frac{\sigma}{n} \text{ and } \Re(\lambda_i) + \frac{n-\sigma}{n} \leq 0   \right\}.
        \end{align*}
        Let $S_2= S_2^{>0} \cup S_2^{\leq 0} $ and $S_3= S_3^{>0} \cup S_3^{\leq 0} $.
        
        For $\frac{n}{2} \leq \sigma \leq n $,
        \begin{equation}
            \label{energy_sum_complement_1}
            E(D_S)+E(\overline{D_S})= 2(n-r-1)+2\sum_{S_1 \cup S_2^{\leq 0} \cup S_3 }|\Re(\lambda_i)|+\frac{2\sigma}{n}(|S_2^{\leq 0}|+|S_3|-|S_1|+1) -2(|S_2^{\leq 0}|+|S_3|).
        \end{equation}

        For $0\leq\sigma < \frac{n}{2}$,
        \begin{equation}
         \label{energy_sum_complement_2}
            E(D_S)+E(\overline{D_S}) = 2(n-r-1)+2\sum_{S_1 \cup S_3^{\leq 0} }|\Re(\lambda_i)|+ \frac{2\sigma}{n}(|S_3^{\leq 0}|-|S_1|+1) -2|S_3^{\leq 0}|.
        \end{equation}
    \end{theorem}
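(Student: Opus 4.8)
The plan is to express both energies through Proposition \ref{energy+} and combine them, using Lemma \ref{lambda_complement_regular} to pass to the spectrum of $\overline{D_S}$. Since $D_S$ is $r$-regular, $\lambda_1=r$ is its spectral radius, and Lemma \ref{lemmalowerbound_p} gives $r\geq \frac{c_2+\sigma}{n}\geq \frac{\sigma}{n}$, so $1\in S_1$ and, by Proposition \ref{energy+},
$$E(D_S)=2\sum_{i\in S_1}\left(\Re(\lambda_i)-\frac{\sigma}{n}\right),$$
where eigenvalues lying exactly on the threshold $\Re(\lambda_i)=\frac{\sigma}{n}$ may be kept in $S_1$ since they contribute $0$ to the sum. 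This is why I expect no harm from the use of $\geq$ in the definition of $S_1$.

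For the complement, Lemma \ref{lambda_complement_regular} gives the spectrum $\{\,n-r-\delta_\sigma\,\}\cup\{\,-\delta_\sigma-\lambda_i : 2\leq i\leq n\,\}$, and $\overline{D_S}$ carries $n-\sigma$ self-loops when $\sigma\geq 1$ (for $\sigma=0$ one reads $\delta_\sigma=1$ and zero self-loops). Applying Proposition \ref{energy+} to $\overline{D_S}$ relative to its own threshold, a direct check shows that in every case the Perron eigenvalue contributes exactly $n-r-1+\frac{\sigma}{n}$, which is $\geq 0$ by the lower bound of Lemma \ref{lemmalowerbound_p} applied to $\overline{D_S}$, while $-\delta_\sigma-\lambda_i$ contributes precisely when $\Re(\lambda_i)+\frac{n-\sigma}{n}<0$, in which case its contribution equals $-\Re(\lambda_i)-\frac{n-\sigma}{n}$. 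The key bookkeeping point is that the $\delta_\sigma$-shift and the complement self-loop count combine so that the dependence on $\delta_\sigma$ cancels and only $\frac{n-\sigma}{n}$ survives, unifying the cases $\sigma=0$ and $\sigma\geq 1$. The indices meeting this condition are exactly $Q:=S_2^{\leq 0}\cup S_3^{\leq 0}$, whence
$$E(\overline{D_S})=2\left(n-r-1+\frac{\sigma}{n}\right)+2\sum_{i\in Q}\left(-\Re(\lambda_i)-\frac{n-\sigma}{n}\right).$$

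Next I would add the two expressions. On $S_1$ one has $\Re(\lambda_i)\geq\frac{\sigma}{n}\geq 0$, so $\Re(\lambda_i)=|\Re(\lambda_i)|$; on $Q$ one has $\Re(\lambda_i)\leq-\frac{n-\sigma}{n}\leq 0$, so $-\Re(\lambda_i)=|\Re(\lambda_i)|$. As $S_1$ and $Q$ are disjoint, collecting the linear terms, the counting terms $\frac{2\sigma}{n}|S_1|$ and $2\frac{n-\sigma}{n}|Q|$, and using $\frac{n-\sigma}{n}=1-\frac{\sigma}{n}$, gives
$$E(D_S)+E(\overline{D_S})=2(n-r-1)+2\sum_{i\in S_1\cup Q}|\Re(\lambda_i)|+\frac{2\sigma}{n}\left(|Q|-|S_1|+1\right)-2|Q|,$$
where $|Q|=|S_2^{\leq 0}|+|S_3^{\leq 0}|$. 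Here again the boundary eigenvalues lying on a threshold contribute $0$ to the sums while being counted in $|S_1|$ or $|Q|$, so the cardinalities are consistent with the closed/open conventions in the definitions of the sets.

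Finally I would invoke two elementary emptiness facts to split into the two regimes. If $\sigma\geq\frac{n}{2}$, the interval $\left(-\frac{n-\sigma}{n},-\frac{\sigma}{n}\right]$ is empty, so $S_3^{>0}=\emptyset$ and $S_3=S_3^{\leq 0}$; substituting $Q=S_2^{\leq 0}\cup S_3$ yields \eqref{energy_sum_complement_1}. If $\sigma<\frac{n}{2}$, the interval $\left(-\frac{\sigma}{n},-\frac{n-\sigma}{n}\right]$ is empty, so $S_2^{\leq 0}=\emptyset$ and $Q=S_3^{\leq 0}$, which yields \eqref{energy_sum_complement_2}. I expect the main obstacle to be not the algebra but the threshold bookkeeping for $\overline{D_S}$: verifying that the Perron contribution is uniformly $n-r-1+\frac{\sigma}{n}$ and that the contributing indices are uniformly those with $\Re(\lambda_i)+\frac{n-\sigma}{n}<0$, across both $\sigma=0$ and $\sigma\geq 1$.
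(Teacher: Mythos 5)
Your proof is correct, and it rests on the same basic ingredients as the paper's proof — Lemma \ref{lambda_complement_regular} for the complement spectrum, regularity to identify the Perron eigenvalues, and bookkeeping of $\Re(\lambda_i)$ against the two thresholds $\frac{\sigma}{n}$ and $-\frac{n-\sigma}{n}$ — but your execution is genuinely different and, in places, tighter. The paper starts from Definition \ref{E(D_S)}, expands both energies as full sums of absolute values over $S_1\setminus\{1\}$, $S_2$, $S_3$, resolves the ambiguous signs on $S_2$ by splitting it into $S_2^{>0}$ and $S_2^{\leq 0}$ mid-computation, carries this out only for $\frac{n}{2}\leq\sigma\leq n$, and dismisses the regime $0\leq\sigma<\frac{n}{2}$ with ``using the similar argument''. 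You instead apply Proposition \ref{energy+} to both digraphs, so only contributing eigenvalues ever appear; this eliminates the $S_2$ case analysis and yields the single identity
\[
E(D_S)+E(\overline{D_S})=2(n-r-1)+2\sum_{i\in S_1\cup Q}|\Re(\lambda_i)|+\frac{2\sigma}{n}\bigl(|Q|-|S_1|+1\bigr)-2|Q|,
\qquad Q=S_2^{\leq0}\cup S_3^{\leq0},
\]
valid for every $\sigma$, from which \eqref{energy_sum_complement_1} and \eqref{energy_sum_complement_2} follow by your two emptiness observations ($S_3^{>0}=\emptyset$ when $\sigma\geq\frac{n}{2}$; $S_2^{\leq0}=\emptyset$ when $\sigma<\frac{n}{2}$). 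Your route is also more careful at exactly the two points the paper leaves implicit: (i) the paper writes $E(D_S)=r-\frac{\sigma}{n}+\cdots$ and $E(\overline{D_S})=n-r-\frac{n-\sigma}{n}+\cdots$ with no absolute value on the leading terms, which requires $r\geq\frac{\sigma}{n}$ and $n-r\geq\frac{n-\sigma}{n}$; you justify both via Lemma \ref{lemmalowerbound_p}, applied to $D_S$ and to $\overline{D_S}$. (ii) For $\sigma=0$ the complement eigenvalues are $-1-\lambda_i$ rather than $-\lambda_i$, so the second regime is not literally ``the same argument''; your check that the $\delta_\sigma$-shift cancels against the complement's threshold ($\frac{n-\sigma}{n}$ when $\sigma\geq1$, $0$ when $\sigma=0$), leaving the uniform condition $\Re(\lambda_i)+\frac{n-\sigma}{n}<0$ and contribution $-\Re(\lambda_i)-\frac{n-\sigma}{n}$, is precisely what makes \eqref{energy_sum_complement_2} valid down to $\sigma=0$. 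In short: same theorem-level strategy, but your unified identity plus specialization is cleaner and actually closes small gaps the paper's case-by-case expansion glosses over.
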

    \begin{proof}
        Suppose first $\frac{n}{2} \leq \sigma \leq n $. From Definition \ref{E(D_S)}, we have 
        \begin{align*}
            E(D_S) &= r-\frac{\sigma}{n} + \sum_{i=2}^n \left| \Re(\lambda_i)-\frac{\sigma}{n} \right| \\
            &= r-\frac{\sigma}{n} + \sum_{S_1-\{1\}} \left( \Re(\lambda_i)-\frac{\sigma}{n} \right)+ \sum_{S_2} \left(\frac{\sigma}{n} -\Re(\lambda_i) \right)+ \sum_{S_3} \left( \frac{\sigma}{n}-\Re(\lambda_i) \right),
        \end{align*}
        
        and 
        \begin{align*}
            E(\overline{D_S}) &= n-r-\frac{n-\sigma}{n} + \sum_{i=2}^n \left| \Re(-\lambda_i)-\frac{n-\sigma}{n} \right|\\
            &= n-r-\frac{n-\sigma}{n}+\sum_{i=2}^n \left| \Re(\lambda_i)+\frac{n-\sigma}{n} \right| \\
            &= n-r-\frac{n-\sigma}{n}+ \sum_{S_1-\{1\}} \left( \Re(\lambda_i)  
            + \frac{n-\sigma}{n} \right)\\ &+ \sum_{S_2} \left| \Re(\lambda_i)+\frac{n-\sigma}{n} \right|+ \sum_{S_3} \left(-\Re(\lambda_i)-\frac{n-\sigma}{n} \right).
        \end{align*}
    Thus
    \begin{align*}
        E(D_S)+E(\overline{D_S}) 
        &= n-1+ \sum_{S_1-\{1\}}  \left( 2\Re(\lambda_i)+1-\frac{2\sigma}{n} \right) +\sum_{S_2} \left(\frac{\sigma}{n} -\Re(\lambda_i) \right) \\
        &+ \sum_{S_2} \left| \Re(\lambda_i)+\frac{n-\sigma}{n} \right| + \sum_{S_3} \left(-2\Re(\lambda_i)-1+\frac{2\sigma}{n} \right).
    \end{align*}

    Note that
    \begin{align*}
        &\sum_{S_2} \left(\frac{\sigma}{n} -\Re(\lambda_i) \right) + \sum_{S_2} \left| \Re(\lambda_i)+\frac{n-\sigma}{n} \right| \\
        &= \left( \sum_{S_2^{>0}} + \sum_{S_2^{\leq 0}} \right)  \left(\frac{\sigma}{n} -\Re(\lambda_i) \right) +  \sum_{S_2^{>0}} \left( \Re(\lambda_i)+\frac{n-\sigma}{n} \right)+\sum_{S_2^{\leq 0}} \left( -\Re(\lambda_i)-\frac{n-\sigma}{n} \right) \\
        &=|S_2^{>0}|+ \sum_{S_2^{\leq 0}} \left( -2\Re(\lambda_i)-1+\frac{2\sigma}{n} \right).
    \end{align*}

    It follows that 
    \begin{align*}
        &E(D_S)+E(\overline{D_S}) \\
        &= n-1+ \sum_{S_1-\{1\}}  \left( 2\Re(\lambda_i)+1-\frac{2\sigma}{n} \right)\\
        &+|S_2^{>0}|+ \sum_{S_2^{\leq 0}} \left( -2\Re(\lambda_i)-1+\frac{2\sigma}{n} \right) + \sum_{S_3} \left(-2\Re(\lambda_i)-1+\frac{2\sigma}{n} \right)\\
        &= n-2r-1+2\sum_{S_1}\Re(\lambda_i)+|S_1|-1- \frac{2\sigma}{n} \left( |S_1|-1 \right)+|S_2^{>0}| - 2\sum_{S_2^{\leq 0}}\Re(\lambda_i)-|S_2^{\leq 0}|+\frac{2\sigma}{n}|S_2^{\leq 0}| \\
        &-2\sum_{S_3} \Re(\lambda_i)-|S_3|+\frac{2\sigma}{n}|S_3|\\
        &=2(n-r-1)+2\sum_{S_1 \cup S_2^{\leq 0} \cup S_3 }|\Re(\lambda_i)|+\frac{2\sigma}{n}(|S_2^{\leq 0}|+|S_3|-|S_1|+1) -2(|S_2^{\leq 0}|+|S_3|).
    \end{align*}
    For $0\leq \sigma < \frac{n}{2}$, using the similar argument, \eqref{energy_sum_complement_2} is immediate.
    \end{proof}
    
    For a digraph with self-loops $D_S$, if it has  $1\leq \sigma<\frac{n}{2}$ self-loops, then its complement $\overline{D_S}$ has $n-\sigma > \frac{n}{2}$ self-loops. Hence, equation \eqref{energy_sum_complement_1} is sufficient for computation.

    \begin{example}
        To illustrate Theorem \ref{sum_of_E_and_E_comp}, let $\overleftrightarrow{(K_{n,n-1})_S}=(\cV_1 \cup \cV_2, \cE)$ with $ |\cV_1|=n, |\cV_2|=n-1$, be the complete bipartite digraph with $S=\cV_1$. Note that $\overleftrightarrow{(K_{n,n-1})_S}$ is $n$-regular and its complement, $\overleftrightarrow{K_n} \bigcup \widetilde{K_{n-1}}$ is $(n-1)$-regular. By \cite{akbari2023selfloop}, the spectrum of $\overleftrightarrow{(K_{n,n-1})_S}$ is 
        \begin{align*}
            \mathrm{Spec} \left( \overleftrightarrow{(K_{n,n-1})_S} \right) = \{[n]^1, [1]^{n-1}, [0]^{n-2}, [1-n]^1 \},
        \end{align*}
        while the spectrum of $\overleftrightarrow{K_n} \bigcup \widetilde{K_{n-1}}$ is 
        \begin{align*}
            \mathrm{Spec} \left(\overleftrightarrow{K_n} \bigcup \widetilde{K_{n-1}} \right) =\{ [n-1]^1, [-1]^{n-1}, [0]^{n-2}, [n-1]^1\}. 
        \end{align*}
        It follows that
        \begin{align*}
            &E\left( \overleftrightarrow{(K_{n,n-1})_S} \right), \\
            &= \left|n-\frac{n}{2n-1} \right| +  (n-1) \left|1-\frac{n}{2n-1}\right| + (n-2)\left|0-\frac{n}{2n-1}\right|+ \left|1-n-\frac{n}{2n-1}\right| \\
            &=\frac{2(3n-1)(n-1)}{2n-1},
        \end{align*}
        and 
        \begin{align*}
              &E \left(\overleftrightarrow{K_n} \bigcup \widetilde{K_{n-1}} \right) \\
              &= \left|n-1-\frac{n-1}{2n-1} \right| +  (n-1) \left|-1-\frac{n-1}{2n-1}\right| + (n-2)\left|0-\frac{n-1}{2n-1}\right|+ \left|n-1-\frac{n-1}{2n-1}\right| \\
              &= \frac{8(n-1)^2}{2n-1}.
        \end{align*}
        Therefore,
        \begin{align*}
            E\left( \overleftrightarrow{(K_{n,n-1})_S} \right)+  E \left(\overleftrightarrow{K_n} \bigcup \widetilde{K_{n-1}} \right) &=\frac{2(3n-1)(n-1)}{2n-1}+ \frac{8(n-1)^2}{2n-1}\\
            &=\frac{2(7n-5)(n-1)}{2n-1}.
        \end{align*}

        Since $\sigma > \frac{n}{2}$, we will use equation \eqref{energy_sum_complement_1}. Note that 
        \begin{align*}
            S_1 &= \{1,2,\ldots,n\}, & |S_1|&=n, \\
            S_2^{>0} &= \{n+1,\ldots, 2n-2 \}, & |S_2^{>0}|&=n-2, \\
            S_2^{\leq 0} &= \emptyset, & | S_2^{\leq 0}|&=0  , \\
            S_3 &= \{2n-1\}, & |S_3|&=1. 
        \end{align*}
        The right hand side of \eqref{energy_sum_complement_1} becomes 
        \begin{align*}
            &=2((2n-1)-n-1)+2(n+(n-1)1+(1)|1-n|)+\frac{2n}{2n-1} (0+1-n+1)-2(0+1) \\
            &= \frac{2(7n-5)(n-1)}{2n-1},
        \end{align*}
        which verifies the equation.
    \end{example}




\bibliography{bibliography}{}
\bibliographystyle{amsplain}



\end{document}